\newtheorem{Lem}{Lemma}
\newtheorem{Def}{Definition}
\newcommand{\enstq}[2]{\left\{#1\mathrel{}\middle|\mathrel{}#2\right\}}
\newcommand{\norm}[1]{\left\|#1\right\|}
\newtheorem{proposition}{Proposition}
\newtheorem{theorem}{Theorem}
\newtheorem{corollary}{Corollary}
\newcommand{\N}{\mathbb{N}}
\newcommand{\R}{\mathbb{R}}
\newcommand{\duality}[2]{\left\langle #1,#2\right\rangle}
\newcommand{\inner}[2]{\left( #1,#2\right)}
\newcommand{\abs}[1]{\left\lvert #1 \right\rvert}
\newcommand{\Cinf}{C^{\infty}}
\newcommand{\isdef}{\mathrel{\mathop:}=}
\title{New preconditioners for the Laplace and Helmholtz integral equations on open curves \\
	\vspace{0.2cm}
	{\Large Analytical framework and numerical results} 
}
\date{}
\author{François Alouges   \thanks{Centre de Mathématiques Appliquées (UMR 7641), Ecole Polytechnique, Route de Saclay, 91128  PALAISEAU Cedex.}     \and
	Martin Averseng $^*$}
\begin{document}


\maketitle

\vspace{0.5cm}

\begin{abstract}
	The Helmholtz wave scattering problem by screens in 2D can be recast into first-kind integral equations which lead to ill-conditioned linear systems after discretization. We introduce two new preconditioners, in the form of square-roots of local operators respectively for the corresponding problems with Dirichlet and Neumann conditions on the arc. They generalize the so-called ``analytical" preconditioners available for Lipschitz scatterers. We introduce a functional setting adapted to the singularity of the problem and enabling the analysis of those preconditioners. The efficiency of the method is demonstrated on several numerical examples.
\end{abstract}

\section*{Introduction}

For the numerical resolution of wave scattering problems, a by-now well-established approach is the boundary elements method, which involves the discretization of integral equations leading to linear systems of smaller size when compared to finite element methods, but the corresponding matrices are fully populated. They are solved efficiently by combining iterative methods such as GMRES \cite{saad1986gmres}, with acceleration methods (e.g. the fast multipole method, see \cite{greengard1987fast} and references therein).  The number of iterations in GMRES depends on the condition number of the linear system. The question of preconditioning, by which we mean minimizing this number of iterations, either by formulating well-conditioned integral equations or by finding preconditioners for the linear systems, has attracted a lot of attention since more than two decades. 

In the case of an obstacle with a $C^\infty$ smooth boundary $\Gamma$, general methods to build efficient preconditioners are known \cite{alouges2007stable,alouges2005new,antoine2007generalized,christiansen2002preconditioner,hiptmair2014mesh,steinbach1998construction}, which in most cases can also be applied in theory and/or in practice if the scatterer is only assumed to have Lipschitz regularity. Among them, one efficient approach is the Generalized Combined Source Integral Equation (GCSIE) method \cite{alouges2005new} initiated by Levadoux in \cite{levadoux2001etude}. It involves the inversion of the discretized GCSIE operator 
\[G_k := S_k \tilde{\Lambda}_k + \frac{I_d}{2} + D_k\]
where $S_k$ and $D_k$ stand for the classical single- and double-layer potentials on $\Gamma$, $I_d$ is the identity operator and $\tilde{\Lambda}_k$ is an approximation of the exterior Dirichlet-to-Neumann (DtN) map $\Lambda_k$ for the Helmholtz equation. In the ideal case ${\tilde{\Lambda}_k = \Lambda_k}$, $G_k$ becomes the identity and, consequently, when $\tilde{\Lambda}_k$ is a compact perturbation of $\Lambda_k$, $G_k$ is a compact perturbation of the identity. It is well-known that the Galerkin discretization of such operators leads to well-conditioned linear systems for which the GMRES method converges supra-linearly \cite{axelsson2009equivalent,hiptmair2006operator,steinbach1998construction}. To build suitable candidates for $\tilde{\Lambda}_k$, a generic tool is the theory of pseudo-differential operators (see e.g. \cite{hormander2007analysis}). This has been succesfully applied by Antoine and Darbas in \cite{antoine2007generalized} who proposed the choice
\begin{equation}
\label{defAntoineDarbas}
\tilde{\Lambda}_k := \sqrt{-\Delta_\Gamma - k^2 I_d}\,,
\end{equation}
based on low-order expansions of the symbol of $\Lambda_k$. The authors also introduce an efficient scheme to approximate the square root operator in \eqref{defAntoineDarbas} relying on Pad\'{e} approximants. Their numerical results clearly demonstrate that the preconditioning performances are independent of the discretization parameters and robust in $k$. 

When the scattering object is a ``screen", that is a curve in 2D and a surface in $3D$ \textbf{with edges} (therefore not a Lipschitz domain), the robustness of the precondtioners in the discretization parameters and in $k$ is lost in practice for all of the aforementioned methods. In this work, we propose a generalized version of the square-root preconditioners of Antoine and Darbas on screens in 2D that overcomes this limitation. To fix the notation, let us write the classical first-kind integral equations corresponding to the Dirichlet and Neumann problems as
\[S_k \lambda = u_D\, \quad N_k \mu = u_N\,,\]
where $S_k$ and $N_k$ are the classical single- and hypersingular layer potentials on the screen $\Gamma$ and $u_D$ and $u_N$ are smooth right-hand sides. We define the operators 
\[P_k = \frac{1}{\omega}\sqrt{-(\omega \partial_\tau)^2 - k^2 \omega^2}\,, \quad Q_k = \omega\left[-(\partial_\tau \omega)^2 - k^2 \omega^2\right]^{-\frac{1}{2}}\,,\]
where $\partial_\tau$ is the tangential derivative on $\Gamma$ and $\omega(x)$ is defined for $x \in \Gamma$ as the square root of the distance from $x$ to the edges of $\Gamma$. We present some numerical evidence showing that the preconditioning performances of $P_k$ (resp. $Q_k$) for $S_k$ (resp. $N_k$)  are independent of the discretization parameters, and only very lightly sensitive to the wavenumber $k$. The careful analysis of $P_k$ and $Q_k$, and the proof of the fact that they provide parametrices of $S_k$ and $N_k$ respectively relies on two new classes of pseudo-differential operators on open curves, and is treated in full details in \cite{averseng}. 

To our knowledge, the recent literature mainly contains two other approaches for the problem of preconditioning for Helmholtz screen scattering problems, and the present work is connected to both of them. The first one, see e.g. \cite{hiptmair2014mesh,hiptmair2017closed,jerez2012explicit,ramaciotti2017some,urzua2014optimal}, builds on recently found closed-form expressions for the inverses of the Laplace layer potentials ($k=0$) on flat screens in 2D and 3D (recovered by a unified approach in \cite{gimperlein2019optimal}). Using compact perturbations arguments, those operators are expected to provide efficient preconditioners for the Helmholtz ($k > 0$) layer potentials on arbitrary smooth screens when the wavenumber $k$ is reasonably small. Here, we show that when $\Gamma$ is a flat segment, $P_0$ and $Q_0$ coincide with those exact inverses (up to minor modifications). Our numerical results show that using $P_k$ and $Q_k$ instead of $P_0$ and $Q_0$ for the Helmholtz problem improves greatly the preconditioning performances. 

The second one, developed by Bruno and Lintner \cite{bruno2012second} is a generalization of the Calder\'{o}n preconditioners available for Lipschitz scatterers. In their work, some weighted versions of the Helmholtz layer potentials, $S_{k,\omega}$ and $N_{k,\omega}$, are considered and it is shown that $S_{k,\omega} N_{k,\omega}$ is a second-kind operator, therefore, $S_{k,\omega}$ gives a good preconditioner for $N_{k,\omega}$ and reciprocally. The weighted layer potentials $S_{k,\omega}$ and $N_{k,\omega}$ also appear in the present study as the fundamental objects on which the arguments are developed. Furthermore, we compare numerically our preconditioning method to that of Bruno and Lintner. In our implementation, we find that their method seems slightly more robust in $k$. However, our preconditioners are significantly faster to evaluate due to their quasi-local form. 

The outline of the paper is as follows. We use the first section to fix notation and recall some important results concerning integral equations on screens. In the second section, we introduce the preconditioners for the Laplace problem on a flat screen. The formulas are generalized to the Helmholtz in the third section. In section 4, we describe the weighted Galerkin setup that we use to discretize the integral equations. In section 5, we describe how we build the preconditioners  operators introduced in sections 2 and 3. Finally, in section 6, we show the performance of our preconditioners in a variety of cases and compare them to other ideas of the literature. 

\section{First kind integral equations}

In the following, we consider a smooth, non-intersecting open arc $\Gamma$, that is, a set of the form
\[\Gamma =  \enstq{\gamma(t)}{t \in [-1,1]}\]
where $\gamma: [-1,1] \to \R^2$ is an injective and $C^\infty$ function. The arc $\Gamma$ models a $1$-dimensional screen in a 2D setting. Let $k \geq 0$ be the wavenumber and $G_k$ the Green kernel defined by 
\[G_k(x) = \begin{cases}
-\frac{1}{2\pi} \ln |x| & \textup{if } k = 0\,,\\
\frac{i}{4}H_0^{(1)}(k |x|) & \textup{if } k>0\,,
\end{cases}\]
where $H_0^{(1)}$ is the Hankel function of first kind and $0$ order. The classical single-layer potential, denoted by $S_k$, is defined for $\phi$ is the space $C^\infty_c(\Gamma)$ of compactly supported functions smooth functions on $\Gamma$ by
\[\forall x \in \Gamma\,, \quad S_k \phi(x) = \int_{\Gamma} G_k(x-y) \phi(y) d\sigma(y)\,\]
where $d\sigma$ is the uniform measure on $\Gamma$. Moreover, the Helmholtz hypersingular layer potential $N_k : C^\infty_c(\Gamma) \to \mathcal{D}'(\Gamma)$ is defined by 
\[\duality{N_k \phi}{\psi} := \iint_{\Gamma \times \Gamma} G_k(x - y)\left(\phi'(x) \psi'(y) - k^2 \phi(x) \psi(y) \right)\,d\sigma(x)d\sigma(y)\,,\]
where $\phi'$ and $\psi'$ denote the arclength derivatives of $\phi$ and $\psi$ and $\mathcal{D}'(\Gamma)$ is the set of distributions on $\Gamma$. 

Let us now recall the definitions of the spaces $H^s(\Gamma)$ and $\tilde{H}^s(\Gamma)$ for $s\in \R$, following \cite[chap. 3]{mclean2000strongly}. For this purpose, we consider a closed curve $\tilde{\Gamma}$ such that $\Gamma \subset \tilde{\Gamma}$. A distribution $u$ on $\Gamma$ is said to be in $H^s(\Gamma)$ if there exists an $U \in H^s(\tilde{\Gamma})$ such that $u = U_{|\Gamma}$. Furthermore, $u \in \tilde{H}^s(\Gamma)$ if $u$ is in ${H}^s(\tilde{\Gamma})$ and $\textup{supp}\,u \subset \Gamma$. 
\begin{proposition}[{see \cite[Thm 1.8]{stephan1984augmented} and \cite[Thm 1.4]{wendland1990hypersingular}}]
	The operator $S_k$ has a continuous extension 
	\[S_k : \tilde{H}^{-\frac{1}{2}}(\Gamma) \to H^{1/2}(\Gamma)\,\]
	which is bicontinuous when $k \neq 0$. When $k = 0$, it is bicontinuous if and only if the logarithmic capacity of $\Gamma$ (see e.g. \cite{djikstra}) is not equal to $1$. Similarly, the operator $N_k$ can be extended continuously as
	\[N_k : \tilde{H}^{\frac{1}{2}}(\Gamma) \to H^{-\frac{1}{2}}(\Gamma)\]
	and this extension is bicontinuous. 
\end{proposition}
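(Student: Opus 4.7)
The plan is to reduce the open-curve problem to a closed-curve problem by using the embedding $\Gamma \subset \tilde{\Gamma}$. For $\phi \in C^\infty_c(\Gamma)$, the zero extension $\tilde{\phi}$ lies in $C^\infty(\tilde{\Gamma})$, and the integral defining $S_k\phi$ coincides with the restriction to $\Gamma$ of the closed-curve single-layer potential $\tilde{S}_k \tilde{\phi}$; an analogous identity holds for $N_k$. Since on the closed curve $\tilde{\Gamma}$ the single-layer operator $\tilde{S}_k$ is known to be a continuous map $H^{-1/2}(\tilde{\Gamma}) \to H^{1/2}(\tilde{\Gamma})$, and the hypersingular operator $\tilde{N}_k$ maps $H^{1/2}(\tilde{\Gamma}) \to H^{-1/2}(\tilde{\Gamma})$ continuously (both being elliptic pseudodifferential operators, of orders $-1$ and $+1$ respectively), composing with the zero-extension $\tilde{H}^s(\Gamma) \hookrightarrow H^s(\tilde{\Gamma})$ and the restriction $H^s(\tilde{\Gamma}) \to H^s(\Gamma)$ yields the continuous extensions claimed.

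For bicontinuity the key tool is a Gårding inequality of the form
\[
\textup{Re}\,\duality{S_k \phi}{\bar\phi} \geq c \norm{\phi}_{\tilde{H}^{-1/2}(\Gamma)}^2 - C\norm{\phi}_{\tilde{H}^{-s}(\Gamma)}^2
\]
for some $s>1/2$, valid on $\tilde{H}^{-1/2}(\Gamma)$, together with the analogous estimate for $N_k$. These inequalities follow from the ellipticity and principal symbols of $\tilde{S}_k,\tilde{N}_k$ on the closed curve, combined with the zero-extension embedding and the compactness of $\tilde{H}^{-1/2}(\Gamma) \hookrightarrow \tilde{H}^{-s}(\Gamma)$. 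Gårding plus compactness gives a Fredholm operator of index zero, so bicontinuity reduces to injectivity.

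Injectivity is then handled case by case. For $S_k$ with $k > 0$ and for $N_k$ with arbitrary $k \geq 0$, if $S_k \phi = 0$ (resp.\ $N_k \phi = 0$), the associated single- or double-layer potential in $\R^2 \setminus \Gamma$ solves a homogeneous Helmholtz problem with either vanishing trace or vanishing normal trace on $\tilde{\Gamma}$; uniqueness of the exterior Dirichlet/Neumann problem (Rellich + unique continuation for $k>0$, Liouville-type arguments for $N_0$) forces the potential, and hence $\phi$, to vanish. For $S_0$, a nonzero element of the kernel corresponds precisely to a nontrivial equilibrium distribution on $\Gamma$, which exists if and only if the constant function $1$ belongs to the range of $S_0$ interpreted via the logarithmic potential; a standard rescaling argument (see e.g.\ the reference to \cite{djikstra}) identifies this exceptional configuration with the condition that the logarithmic capacity of $\Gamma$ equals $1$.

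The main obstacle I anticipate is the precise formulation of the Gårding inequality on $\tilde{H}^{-1/2}(\Gamma)$ rather than on $H^{-1/2}(\tilde{\Gamma})$: one must verify that zero-extension genuinely preserves the norm up to constants, and that the compact perturbation controlling the lower-order term acts on the correct space. The capacity condition for $k=0$ is delicate but ultimately a clean dichotomy from potential theory, and the Helmholtz injectivity is standard once the right-hand half-space limiting absorption framework is in place; since the paper merely cites these results, the outline above suffices for the purposes of the present work.
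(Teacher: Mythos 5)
The paper does not prove this proposition; it is quoted directly from Stephan--Wendland, so there is no internal argument to compare against. Your outline is the standard route taken in those references (and in McLean): continuity by viewing $S_k$ and $N_k$ as restrictions of elliptic pseudodifferential operators of order $-1$ and $+1$ on a closed curve $\tilde\Gamma$ containing $\Gamma$, then bicontinuity via a G\aa rding inequality on the closed subspaces $\tilde H^{\mp 1/2}(\Gamma)$ (coercivity on $H^{\mp1/2}(\tilde\Gamma)$ passes to any closed subspace, and $S_k-S_0$, $N_k-N_0$ are compact), Fredholm alternative, and injectivity. Two points need tightening. First, when $S_k\phi=0$ the associated potential has vanishing trace on $\Gamma$, not on $\tilde\Gamma$; the uniqueness argument is for the exterior problem of the \emph{arc} (Rellich plus unique continuation in the connected set $\R^2\setminus\overline\Gamma$), and one recovers $\phi$ as the jump of the normal derivative across $\Gamma$. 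Second, your characterization of the $k=0$ exceptional case is garbled: the equilibrium distribution $\mu_\Gamma$ always exists, and $S_0\mu_\Gamma$ equals the constant $-\tfrac{1}{2\pi}\log\mathrm{cap}(\Gamma)$ on $\Gamma$; the kernel of $S_0$ on $\tilde H^{-1/2}(\Gamma)$ is nontrivial precisely when this constant vanishes, i.e.\ when $\mathrm{cap}(\Gamma)=1$ (and when $\mathrm{cap}(\Gamma)\neq 1$ the constant $1$ \emph{is} in the range, contrary to what your phrasing suggests). With those corrections the outline is a faithful reconstruction of the cited proofs.
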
 
It will be assumed in the following that the logarithmic capacity of $\Gamma$ is not $1$. This can always be satisfied by properly rescaling the problem. 
In this work, we are concerned with the resolution of the integral equations
\begin{equation}
\label{integralEquations}
S_k \lambda = u_D\,, \quad N_k \mu = u_N\,.
\end{equation}
where $u_D \in H^{1/2}(\Gamma)$ and $u_N \in H^{-1/2}(\Gamma)$. They are related to the Helmholtz problem in $\mathbb{R}^2 \setminus \Gamma$ with prescribed Dirichlet boundary data $u_D$ (``sound-soft") or Neumann boundary data $u_N$ (``sound-hard") respectively. 

Due to the singularity of the manifold $\Gamma$, the solutions $\lambda$ and $\mu$ of the previous equations have edge singularities even if $u_D$ and $u_N$ are arbitrarily smooth. More precisely, Costabel et al. have shown
\begin{proposition}[{\cite[Cor. A.5.1]{costabel2003asymptotics}}]
	\label{duduch}
	Assume that $u_D$ and $u_N$ are in $C^\infty(\Gamma)$. Then the solutions $\lambda$ and $\mu$ of eq.~\eqref{integralEquations} can be expressed as 
	\[\lambda = \frac{\alpha}{\omega}, \quad \mu = \omega \beta\]
	where $\alpha$ and $\beta$ are in $C^\infty(\Gamma)$ and where 
	$$\omega(x) = \sqrt{{d}(x, \partial \Gamma)}\,.$$
	
\end{proposition}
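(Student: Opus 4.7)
The plan is to reduce the problem to a model situation near each endpoint, where an explicit inversion of the integral operators is available. The starting observation is that both $S_k$ and $N_k$ are classical pseudo-differential operators (of orders $-1$ and $+1$ respectively) on the auxiliary closed curve $\tilde{\Gamma}$, elliptic on all of $\tilde{\Gamma}$. Consequently, standard elliptic regularity applies in the interior of $\Gamma$: on any compact subset of $\Gamma \setminus \partial \Gamma$, the solutions $\lambda$ and $\mu$ are as smooth as $u_D$ and $u_N$. This reduces the whole statement to a local analysis near the two endpoints of $\Gamma$.

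Next, near a chosen endpoint I would introduce a local chart via the arclength parametrization, straightening a neighborhood of the endpoint to a segment $[0, a)$ on which $\omega$ is equivalent to $\sqrt{s}$. Using the expansion of $H_0^{(1)}$ near the origin, one writes $G_k = G_0 + R_k$ where $R_k$ has a $C^\infty$ kernel; the curvature of $\gamma$ also enters only through smooth kernels. Thus, modulo operators that are smoothing in a neighborhood of the endpoint, one may replace $S_k$ and $N_k$ by the Laplace operators $S_0$ and $N_0$ associated with the flat segment. This is the content of the localization and freezing-of-coefficients step.

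For the flat-segment Laplace model, one then inverts explicitly. Writing the candidate solution $\lambda = \alpha/\omega$ with $\omega(x) = \sqrt{1-x^2}$ on $[-1,1]$, the action of $S_0$ is diagonalized in the Chebyshev basis: the functions $T_n(x)/\omega(x)$ are eigenfunctions, with eigenvalues of explicit rate. Smoothness of $u_D$ is equivalent to rapid decay of its Chebyshev coefficients, which by the explicit spectrum is equivalent to smoothness of $\alpha$. An entirely analogous Chebyshev (of second kind) diagonalization handles $N_0$ with the ansatz $\mu = \omega \beta$. This produces the desired factorization on the model segment.

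Finally, I would glue the local conclusions using a partition of unity subordinate to the cover consisting of the interior of $\Gamma$ and a neighborhood of each endpoint. The delicate step — and the main obstacle — is to propagate the \emph{smoothness} of $\alpha, \beta$ through the non-local perturbation $R_k$ and the change of variables, and to show that the remainder after removing the explicit $\omega$-weight is not merely of limited regularity but actually $C^\infty$. The natural framework for this is the Mellin pseudo-differential calculus near the corner, as developed in the Costabel–Stephan school: one writes the parametrix as a Mellin operator whose symbol expansion generates an asymptotic series in powers $s^{n+1/2}$ for the singular parts of $\lambda$ and $\mu$; since only the exponent $-1/2$ (respectively $+1/2$) occurs in the leading position and all remainder terms are of positive integer order in $s$, one obtains precisely that the singular factor can be isolated as $\omega^{\pm 1}$ times a $C^\infty$ function.
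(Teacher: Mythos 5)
There is a genuine gap, and it sits exactly where you flag ``the main obstacle.'' First, a factual error in the localization step: the decomposition $G_k = G_0 + R_k$ does \emph{not} produce a remainder with a $C^\infty$ kernel. From the series for $H_0^{(1)}$ one finds
\[
G_k(x) - G_0(x) \;=\; -\tfrac{1}{2\pi}\ln\abs{x}\,\left(J_0(k\abs{x}) - 1\right) + (\text{analytic}) \;=\; O\!\left(\abs{x}^2 \ln\abs{x}\right),
\]
which is $C^1$ but not $C^2$ across the diagonal. Hence the reduction ``modulo smoothing operators, replace $S_k$ and $N_k$ by the flat Laplace operators'' is not available: the perturbation is only finitely regularizing, and the natural bootstrap $\lambda = S_0^{-1}(u_D - R_k\lambda)$ gains a fixed finite number of derivatives on $\alpha$ and then stalls; it cannot deliver $\alpha \in C^\infty$. (The curvature correction $\tfrac12\ln\left(\abs{\gamma(s)-\gamma(t)}^2/(s-t)^2\right)$ \emph{is} genuinely smooth for a smooth non-self-intersecting arc, so that part of the localization is fine; it is the $k$-dependence that breaks the argument.) Second, the step you defer to ``the Mellin pseudo-differential calculus of the Costabel--Stephan school''---namely that the endpoint expansion contains only the exponents $n-\tfrac12$ (resp. $n+\tfrac12$), with no logarithmic terms, to all orders---is precisely the content of the theorem being proved; invoking it as known makes the argument circular rather than a proof.

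For calibration: the paper itself does not prove this proposition either. It cites it from Costabel--Dauge--Duduchava (``asymptotics without logarithmic terms''), and the pseudo-differential framework needed to treat the curved, $k>0$ case is developed in the companion work \cite{averseng}. What the paper does prove directly is Corollary~\ref{CorSingularity}, the flat-segment, $k=0$, Dirichlet case, and it does so by exactly the Chebyshev diagonalization you use for your model problem ($S_{0,\omega}T_n = \sigma_n T_n$, rapid decay of coefficients, $T^\infty = C^\infty$). In other words, the one fully rigorous piece of your proposal coincides with the special case the paper actually establishes; your interior-ellipticity and partition-of-unity scaffolding is sound in outline, but the passage from the model problem to the general statement is where the real work lies and is not carried out.
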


In addition to the singular nature of the problem, the Galerkin discretization of first-kind integral equations is known to produce ill-conditioned linear systems, see \cite[Sec. 4.5]{sauter2011boundary}. The aim of this paper is to introduce a formalism that enables to resolve the singularity and provide efficient preconditioners for the linear systems.


\section{Laplace equation on a flat segment}

We start with the particular case where $\Gamma$ is the flat segment 
$\Gamma = [-1,1]\times \{0\}$
and furthermore $k=0$. The associated integral equations,
\begin{equation}
\label{symmIntegralEqs}
S_0\lambda = u_D \quad \text{ and } \quad N_0\mu = u_N\,,
\end{equation}
with logarithmic kernels, are the object of a considerable number of papers in the 1990's, for instance \cite{atkinson1991numerical,estrada1989integral,jiang2004second,monch1996numerical,sloan1992collocation,yan1990cosine}. 

More recently, Jerez-Hanckes and Nédélec \cite{jerez2012explicit} have exhibited exact inverses of those operators through explicit variational forms (Prop 3.1 and 3.3) and identities involving tangential square-roots of differential operators with boundary conditions at the end points (Prop 3.10 and subsequent remark). In Gimperlein et al. \cite{gimperlein2019optimal}, those results are recovered with a different method, and it is shown that $S_0$ and $N_0$ are related fractional powers of the Laplace-Beltrami operator on the line $\enstq{(x,0)}{x \in \R}$, by considering the natural extensions of functions by $0$ outside $\Gamma$. 

Here we give a new expression of the inverses of $S_0$ and $N_0$ (\autoref{theorem1} and \autoref{theorem2}), in the form of square roots of suitably \textbf{weighted} tangential differential operators. Those formulas do not involve boundary conditions at the end points nor extensions by $0$ outside $\Gamma$. The proofs of the identities are very simple, nevertheless, it seems, to the best of our knowledge, that they have remained unnoticed so far. Their generalization to the case $k > 0$ (section 3) is the main object of this work.   

\subsection{Analytical setting}

We introduce the Chebyshev polynomials of first and second kinds \cite{mason2002chebyshev}, respectively given by 
\[\forall x \in [-1,1]\, \quad T_n(x) = \cos(n \arccos(x)), \quad U_n(x) = \dfrac{\sin((n+1) \arccos(x))}{\sqrt{1 - x^2}}\,,\]
and we denote by $\omega$ the operator $\omega:u(x) \mapsto \omega(x)u(x)$ with $\omega(x) = \sqrt{1 - x^2}$. We also denote by  
$\partial_x$ the derivation operator. The Chebyshev polynomials satisfy the ordinary differential equations

$$
(1-x^2)\partial_{xx}T_n -x\partial_x T_n +n^2T_n =0
$$
and
$$
(1-x^2)\partial_{xx}U_n -3x\partial_xU_n +n(n+2)U_n =0
$$
which can be rewritten in divergence form as
\begin{eqnarray}
(\omega\partial_x)^2 T_n &=& -n^2T_n\,, \label{cheb1}\\
(\partial_x\omega)^2 U_n &=& -(n+1)^2U_n\,,\label{cheb2}
\end{eqnarray}
where we emphasize that $\partial_x\omega f$ should be understood as the composition of the operators $\partial_x$ and $\omega$ applied to $f$, that is $\partial_x \omega f(x) = \frac{d}{dx}\left( \omega(x) f(x)\right)$. Both $T_n$ and $U_n$ are polynomials of degree $n$, and 
form complete orthogonal families respectively of the Hilbert spaces 
$$L^2_{\frac{1}{\omega}} \isdef \enstq{u \in L^1_\textup{loc}(-1,1)} {\int_{-1}^{1} \dfrac{f^2(x)}{\sqrt{1 - x^2} }dx< + \infty}$$
and 
$$L^2_{\omega} \isdef \enstq{u \in L^1_\textup{loc}(-1,1)} {\int_{-1}^{1} {f^2(x)}{\sqrt{1 - x^2} }dx< + \infty}\,,$$
see e.g. \cite{mason2002chebyshev} and in particular Thm. 5.2. This fact can also be verified directly by remarking that for $(f,g) \in L^2_\frac{1}{\omega} \times L^2_\omega$, the functions 
\[\mathcal{C}f(\theta) := f(\cos\theta)\,, \quad \mathcal{S}g(\theta) := \sin(\theta) g(\cos(\theta))\]
are respectively even and odd functions in $L^2(-\pi,\pi)$. This can be verified simply using the change of variables $x = \cos \theta$. The completeness of the polynomials $T_n$ and $U_n$ in $L^2_\frac{1}{\omega}$ and $L^2_\omega$ are deduced from the completeness of \[\mathcal{C}T_n(\theta) =  \cos(n\theta) \quad \textup{and} \quad \mathcal{S}U_n(\theta) = \sin((n+1)\theta)\]
in the space of $2\pi$-periodic $L^2$ \textbf{even} or \textbf{odd} functions respectively. 

As a consequence, any
$u\in L^2_{\frac{1}{\omega}}$ can be decomposed through the first kind Chebyshev series 
\begin{equation*}
\label{FCseries}
u(x) = \sum_{n=0}^{+\infty} \hat{u}_n T_n(x)
\end{equation*}
where the Fourier-Chebyshev coefficients $\hat{u}_n$ are given by $\hat{u}_n \isdef \dfrac{\inner{u_n}{T_n}_\frac{1}{\omega}}{\inner{T_n}{T_n}_\frac{1}{\omega}}\,.$
with the inner product
\[\inner{u}{v}_{\frac{1}{\omega}} \isdef \frac{1}{\pi}\int_{-1}^{1} \frac{u(x) \overline{v(x)}}{\omega(x)}\,dx\,.\] 
Similarly, any function $v\in L^2_{\omega}$ can be decomposed along the $(U_n)_{n\geq 0}$ as
\[ v(x) = \sum_{n=0}^{+\infty} \check{v}_n U_n(x)\]
where the coefficients $\check{v}_n$ are given by $\check{v}_n \isdef 
\dfrac{\inner{v}{U_n}_{\omega}}{\inner{U_n}{U_n}_\omega}$ with the inner product
\[\inner{u}{v}_\omega = \int_{-1}^1 u(x) v(x) \omega(x)\,dx\,.\] 
Those properties can be used to define Sobolev-like spaces. 
\begin{Def}
	For all $s \geq 0$, we define 
	\[T^s = \enstq{ u \in L^2_\frac{1}{\omega}}{ \sum_{n=0}^{+\infty}(1+n^2)^s \abs{\hat{u}_n}^2 < + \infty}.\]
	Endowed with the scalar product
	\[\inner{u}{v}_{T^s} = \hat{u}_0 \overline{\hat{v}_0} + \frac{1}{2}\sum_{n=1}^{+\infty}(1+n^2)^s\hat{u}_n \overline{\hat{v}_n},\]
	$T^s$ is a Hilbert space for all $s \geq 0$. 
	Similarly, we set
	\[U^s = \enstq{u \in L^2_\omega}{ \sum_{n=0}^{+\infty} (1 + n^2)^s\abs{\check{u}_n}^2}\]
	which is a Hilbert space for the scalar product
	\[\inner{u}{v}_{U^s} = \frac{1}{2} \sum_{n=0}^{+ \infty} (1 + (n+1)^2)^s\check{u}_n\overline{\check{v}_n}.\]
\end{Def}
One can extend the definitions of $T^s$ ad $U^s$ for $s\in \R$, in which case they form interpolating scales of Hilbert space, see \cite[Section 1]{averseng} for details. Letting $H^s_e \oplus H^s_o$ be the partition of the Sobolev space of $2\pi$-periodic functions $H^s_{\textup{per}}$ into even and odd functions, it is easy to check that the pullbacks
\[\mathcal{C} : T^s \to H^s_e \quad \textup{and} \quad \mathcal{S} : U^s \to H^s_o\] 
are isomorphisms. It follows that the inclusions 
\[T^{s'} \subset T^s, \quad U^{s'} \subset U^s, \quad s' > s\]
are compact. 
Denoting by $T^\infty = \cap_{s \geq 0} T^s$ and similarly for $U^\infty$, it is shown in \cite[Lem. 7]{averseng} that
\begin{Lem}
	\label{LemTinfCinf}
	\[T^{\infty} = U^{\infty} = C^{\infty}([-1,1])\,.\]
\end{Lem}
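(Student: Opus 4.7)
The plan is to prove both equalities $T^{\infty} = C^{\infty}([-1,1])$ and $U^{\infty} = C^{\infty}([-1,1])$ by establishing the two inclusions for each, working directly with Chebyshev expansions.

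For the easy direction $C^{\infty}([-1,1]) \subset T^{\infty} \cap U^{\infty}$, I would appeal to the isomorphisms $\mathcal{C}: T^s \to H^s_e$ and $\mathcal{S}: U^s \to H^s_o$ recalled just before the lemma. If $u \in C^{\infty}([-1,1])$, then $\mathcal{C}u(\theta) = u(\cos\theta)$ and $\mathcal{S}u(\theta) = \sin(\theta) \, u(\cos\theta)$ are $C^{\infty}$ and $2\pi$-periodic, and respectively even and odd, so they lie in every $H^s_{\textup{per}}$. Pulling back through the isomorphisms places $u$ in every $T^s$ and every $U^s$, hence in $T^{\infty} \cap U^{\infty}$.

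The reverse direction $T^{\infty} \subset C^{\infty}([-1,1])$ (and its analogue for $U^{\infty}$) is the main obstacle, because one has to upgrade the formal smoothness implied by rapid decay of the Chebyshev coefficients to genuine $C^{\infty}$ regularity up to the endpoints $\pm 1$ where the weight $\omega$ vanishes. Rather than invoking a Whitney-type factorization for smooth even periodic functions (which would let one factor $\mathcal{C}u$ through the squaring map), I would argue directly: if $u \in T^{\infty}$ then the coefficients $\abs{\hat{u}_n}$ decay faster than any polynomial in $n$, while the Markov inequality gives $\norm{T_n^{(m)}}_{L^{\infty}([-1,1])} = O(n^{2m})$ uniformly in $n$. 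Consequently the series $\sum_n \hat{u}_n T_n^{(m)}$ converges uniformly on the closed interval $[-1,1]$ for every $m \geq 0$; since it already converges to $u$ in $L^2_{1/\omega}$, this identifies $u$ with a $C^{\infty}$ function on $[-1,1]$.

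The inclusion $U^{\infty} \subset C^{\infty}([-1,1])$ is entirely parallel, using the polynomial bound $\norm{U_n^{(m)}}_{L^{\infty}([-1,1])} = O(n^{2m+1})$ (which follows from $\norm{U_n}_{\infty} = n+1$ and the Markov inequality applied $m$ times) together with the rapid decay of the coefficients $\check{u}_n$.
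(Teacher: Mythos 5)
Your proof is correct. Note that the paper does not actually prove this lemma in the text: it delegates it entirely to \cite[Lem. 7]{averseng}, where (judging from the surrounding discussion) the natural route is through the pullback isomorphisms $\mathcal{C}:T^s\to H^s_e$, $\mathcal{S}:U^s\to H^s_o$ together with the classical fact that a smooth even $2\pi$-periodic function of $\theta$ is a smooth function of $\cos\theta$ (a Whitney-type factorization). You use the pullbacks only for the easy inclusion $C^\infty([-1,1])\subset T^\infty\cap U^\infty$, and for the hard inclusion you argue directly on the interval: rapid decay of the Fourier--Chebyshev coefficients (which indeed follows from $\sum_n(1+n^2)^s\abs{\hat u_n}^2<\infty$ for all $s$) combined with the V.~A. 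Markov bound $\norm{T_n^{(m)}}_{L^\infty([-1,1])}=T_n^{(m)}(1)=O(n^{2m})$ gives uniform convergence of every differentiated series on the \emph{closed} interval, hence $C^m$ regularity up to the endpoints for every $m$ by the standard term-by-term differentiation theorem; the identification with $u$ follows since uniform convergence implies convergence in $L^2_{1/\omega}$ (the weight being integrable). The $U_n$ case with $\norm{U_n^{(m)}}_\infty=O(n^{2m+1})$ is indeed parallel, e.g. via $U_n=T_{n+1}'/(n+1)$. What your approach buys is a self-contained, elementary argument that never leaves $[-1,1]$ and sidesteps the Whitney factorization entirely; what the pullback route buys is that it treats both inclusions symmetrically through a single isomorphism and reuses machinery the paper has already set up for the Sobolev scales. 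Either is acceptable; yours fills a gap the paper leaves to an external reference.
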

\noindent For $s = \pm \frac{1}{2}$, those spaces 
have been analyzed (with different notation) e.g. in \cite{jerez2012explicit} and verify
\begin{equation}
\label{lemJerez1}
T^{-1/2} = {\omega} \tilde{H}^{-1/2}(-1,1), \quad T^{1/2} = H^{1/2}(-1,1)\,,
\end{equation}
\begin{equation}
\label{lemJerez2}
U^{-1/2} = H^{-1/2}(-1,1), \quad U^{1/2} = \frac{1}{\omega }\tilde{H}^{1/2}(-1,1)\,.
\end{equation}

\subsection{Laplace single-layer equation}

We start with the single-layer integral equation $S_0\lambda = g$, with $\lambda \in \tilde{H}^{-1/2}(\Gamma)$, that is
\begin{equation}
-\frac{1}{2\pi}\int_{-1}^{1} \log|x-y| \lambda(y) = g(x), \quad \forall x\in (-1,1)\,.\label{Slambda}
\end{equation} 
The following result is the fundamental tool for studying this equation:

\begin{Lem}[See e.g. {\cite[Thm 9.2]{mason2002chebyshev}}]
	For all $n\in \mathbb{N}$, we have
	\[-\frac{1}{2\pi}\int_{-1}^{1} \frac{\ln|x-y|}{\sqrt{1 - y^2}}T_n(y)dy = \sigma_n T_n(x)\]
	where
	\[\sigma_n = \begin{cases}
	\dfrac{\ln(2)}{2} & \text{if } n=0\\
	\\
	\dfrac{1}{2n} & \text{otherwise}.
	\end{cases}\]
	\label{STn}
\end{Lem}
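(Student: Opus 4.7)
The plan is to reduce the integral to a standard Fourier computation via the substitution $y = \cos\theta$, $x = \cos\phi$, which maps the weighted integrand to something uniform and turns $T_n(y)$ into $\cos(n\theta)$. After the change of variables (noting that $\sqrt{1-y^2}\,d\theta = -dy$), the identity to be proved becomes
\[-\frac{1}{2\pi}\int_0^\pi \ln|\cos\phi - \cos\theta|\,\cos(n\theta)\,d\theta = \sigma_n \cos(n\phi).\]
This is a purely trigonometric identity, and I would verify it by computing the Fourier cosine coefficients (on $[0,\pi]$) of the function $\theta \mapsto \ln|\cos\phi - \cos\theta|$ at fixed $\phi$.

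To get those coefficients explicitly, I would use the factorisation
\[\cos\phi - \cos\theta = -2\sin\tfrac{\theta+\phi}{2}\sin\tfrac{\theta-\phi}{2},\]
which yields
\[\ln|\cos\phi - \cos\theta| = \ln 2 + \ln\bigl|\sin\tfrac{\theta+\phi}{2}\bigr| + \ln\bigl|\sin\tfrac{\theta-\phi}{2}\bigr|,\]
and then invoke the classical Fourier expansion (valid in $L^1_{\mathrm{loc}}$)
\[\ln\bigl|\sin\tfrac{\alpha}{2}\bigr| = -\ln 2 - \sum_{k=1}^{+\infty} \frac{\cos(k\alpha)}{k}.\]
Applying this to $\alpha = \theta \pm \phi$ and combining the two resulting series via the product-to-sum identity $\cos(k(\theta+\phi)) + \cos(k(\theta-\phi)) = 2\cos(k\theta)\cos(k\phi)$ gives the master expansion
\[\ln|\cos\phi - \cos\theta| = -\ln 2 - 2\sum_{k=1}^{+\infty} \frac{\cos(k\theta)\cos(k\phi)}{k}.\]

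From here the conclusion is mechanical: integrating against $\cos(n\theta)$ on $[0,\pi]$ and using the orthogonality relations $\int_0^\pi \cos(k\theta)\cos(n\theta)\,d\theta = \tfrac{\pi}{2}\delta_{kn}$ for $k,n\geq 1$ and $\int_0^\pi \cos(n\theta)\,d\theta = \pi\,\delta_{n0}$, only the $k=n$ term survives. This leaves $\sigma_0 = \tfrac{\ln 2}{2}$ from the constant term and $\sigma_n = \tfrac{1}{2n}$ for $n\geq 1$ from the series, which is exactly the claim.

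The one technical point that deserves care is the term-by-term integration of the logarithmic Fourier series, since $\ln|\sin(\alpha/2)|$ is not bounded. I would justify it by a standard regularisation, replacing $\ln|\cdot|$ with $\ln\sqrt{\epsilon^2 + (\cdot)^2}$ (or truncating the series), performing the orthogonality computation for the smooth approximant, and passing to the limit $\epsilon \to 0$ by dominated convergence — the integrand is dominated by $|\ln|\cos\phi - \cos\theta||$, which is integrable on $[0,\pi]$ uniformly in $\phi \in (-1,1)$. This is the only non-routine step; everything else is bookkeeping with trigonometric identities.
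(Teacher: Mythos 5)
Your proof is correct and is precisely the classical argument behind the citation the paper gives (Mason--Handscomb, Thm 9.2); the paper itself supplies no proof of this lemma. The cosine substitution, the factorisation $\cos\phi-\cos\theta=-2\sin\tfrac{\theta+\phi}{2}\sin\tfrac{\theta-\phi}{2}$, the expansion $\ln\bigl|2\sin\tfrac{\alpha}{2}\bigr|=-\sum_{k\ge1}\cos(k\alpha)/k$, and the orthogonality bookkeeping all check out, and the term-by-term integration is legitimate as you say (one can also simply note that the partial sums converge in $L^2(0,\pi)$ since the coefficients are square-summable, so pairing with the bounded function $\cos(n\theta)$ passes to the limit).
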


Using the decomposition of $g$ on the basis $(T_n)_n$, we see at once that the solution $\lambda$ to equation \eqref{Slambda} admits the expansion 
\begin{equation}
\lambda(x) = \frac{1}{\sqrt{1-x^2}}\sum_{n=0}^{+ \infty} \frac{\hat{g}_n}{\sigma_n} T_n(x)\,.
\label{expansionLambda}
\end{equation}
As a corollary, we obtain by an alternative proof the result of Costabel et al. cited in Proposition \ref{duduch} in this particular case:
\begin{corollary}
	\label{CorSingularity}
	If the data $g$ is in $C^{\infty}([-1,1])$, the solution $\lambda$ to the equation 
	\[S_0\lambda = g\]
	is of the form 
	\[\lambda(x) = \dfrac{\alpha(x)}{\sqrt{1-x^2}}\]
	with $\alpha \in C^{\infty}([-1,1])$.  
\end{corollary}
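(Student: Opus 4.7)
The plan is to read off the regularity of $\alpha(x) := \sum_{n=0}^{+\infty} (\hat{g}_n/\sigma_n) T_n(x)$ from the decay of its Fourier-Chebyshev coefficients, using \autoref{LemTinfCinf} to go from coefficient decay back to $C^\infty$ regularity. Concretely, since $g \in C^\infty([-1,1])$, \autoref{LemTinfCinf} gives $g \in T^\infty = \bigcap_{s\geq 0} T^s$, i.e.\ $\sum_{n\geq 0} (1+n^2)^s |\hat{g}_n|^2 < +\infty$ for every $s \geq 0$.

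Next I would use \autoref{STn} to justify writing the solution as
\[\lambda(x) = \frac{\alpha(x)}{\sqrt{1-x^2}}, \qquad \alpha(x) = \sum_{n=0}^{+\infty} \frac{\hat{g}_n}{\sigma_n} T_n(x),\]
which is exactly the content of the expansion \eqref{expansionLambda} rewritten in the desired factored form. So it remains to prove $\alpha \in C^\infty([-1,1])$, and again by \autoref{LemTinfCinf} it suffices to show $\alpha \in T^s$ for every $s \geq 0$.

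From $\sigma_n = 1/(2n)$ for $n \geq 1$ (and $\sigma_0 = \ln(2)/2 \neq 0$), there is a constant $C>0$ with $1/\sigma_n \leq C(1+n)$ for all $n \geq 0$. Hence the Fourier-Chebyshev coefficients $\hat{\alpha}_n = \hat{g}_n/\sigma_n$ satisfy $|\hat{\alpha}_n|^2 \leq C^2(1+n)^2 |\hat{g}_n|^2$, so that for any $s \geq 0$,
\[\sum_{n=0}^{+\infty} (1+n^2)^s |\hat{\alpha}_n|^2 \;\leq\; C'\sum_{n=0}^{+\infty} (1+n^2)^{s+1} |\hat{g}_n|^2 \;<\; +\infty,\]
using $g \in T^{s+1}$ in the last step. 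This gives $\alpha \in T^\infty$, and \autoref{LemTinfCinf} concludes $\alpha \in C^\infty([-1,1])$.

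There is essentially no hard step here: the only thing one needs to be slightly careful about is the separate treatment of $n=0$ in the bound on $1/\sigma_n$, and the fact that one must lose one order of regularity (index $s \mapsto s+1$) due to the factor $n$ coming from $1/\sigma_n$ — but since $g$ is assumed $C^\infty$, one has arbitrary regularity to spare, so no loss is visible in the conclusion.
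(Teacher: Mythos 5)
Your proof is correct and follows essentially the same route as the paper: both define $\alpha = \omega\lambda$, use the expansion \eqref{expansionLambda} to identify $\hat{\alpha}_n = \hat{g}_n/\sigma_n$, and conclude via \autoref{LemTinfCinf} that $\alpha \in T^\infty = C^\infty([-1,1])$. The only difference is that you make explicit the bound $1/\sigma_n \leq C(1+n)$ and the resulting one-order loss of regularity, which the paper leaves implicit.
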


\begin{proof}
	Let $\alpha (x)= \sqrt{1 - x^2}\,\lambda(x)$ where $\lambda$ is the solution of $S_0\lambda = g$.  
	By \autoref{LemTinfCinf}, if $g \in C^{\infty}([-1,1])$, then $g \in T^{\infty}$, and by equation 
	\eqref{expansionLambda}, 
	\[ \hat{\alpha}_n = \frac{\hat{g}_n}{\sigma_n}\,,\]
	from which we deduce that $\alpha$ also  belongs to $T^{\infty} = C^{\infty}([-1,1])$. 
\end{proof}

\noindent Following \cite{bruno2012second}, we introduce the weighted single layer operator as the operator 
that appears in \autoref{STn}.
\begin{Def}
	Let $S_{0,\omega}$ be the weighted single layer operator defined by
	\[{S_{0,\omega}} : \quad {\alpha \in \Cinf([-1,1])}\mapsto {-\dfrac{1}{2\pi}}{\int_{-1}^1\dfrac{\ln|x-y|}{\omega(y)} \alpha(y)dy}\,.\]
\end{Def}
\noindent Lemma \ref{STn} can be restated as saying that the Chebyshev polynomials $T_n$ are eigenfunctions of $S_{0,\omega}$. To obtain the solution of the single-layer( integral equation \eqref{Slambda}, we thus solve the \textbf{weighted} single-layer integral equation
\begin{equation}
S_{0,\omega} \alpha = u_D\,,
\label{Somegaalpha}
\end{equation}
and let $\lambda = \frac{\alpha}{\omega}$, which indeed belongs to $\tilde{H}^{-1/2}(\Gamma)$ by eq.~\eqref{lemJerez1}. 

Comparing the eigenvalues of $S_{0,\omega}$ and $-(\omega \partial_x)^2$, we directly obtain the following result:

\begin{theorem} 
	\label{theorem1}
	There holds
	\[S_{0,\omega}^{-1} = 2\sqrt{-(\omega \partial_x)^2} + \frac{2}{\ln(2)}\pi_0\,\]
	or equivalently, 
	\[S_0^{-1} = \frac{2}{\omega} \sqrt{-(\omega \partial_x)^2} + \frac{2}{\ln(2)} \frac{\pi_0}{\omega}\,. \]
	where $\pi_0$ is the $L^2_\frac{1}{\omega}$ orthogonal projection on $T_0$, that is
	\[\quad \pi_0 \phi =\frac{1}{\pi} \int_{-1}^{1} \frac{\phi(y) dy}{\omega(y)}\,.\]
\end{theorem}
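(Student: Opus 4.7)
The plan is to prove the identity by a spectral argument on the Chebyshev basis, which simultaneously diagonalizes every operator appearing in the statement.

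First I would define the operator $\sqrt{-(\omega \partial_x)^2}$ by functional calculus relative to the Chebyshev basis: since $(T_n)_{n\geq 0}$ is an orthogonal basis of $L^2_{1/\omega}$ and, by equation \eqref{cheb1}, $-(\omega\partial_x)^2 T_n = n^2 T_n$, the natural non-negative square root acts by
\[
\sqrt{-(\omega\partial_x)^2} \,T_n \;=\; n\, T_n, \qquad n \geq 0,
\]
extended by $L^2_{1/\omega}$-linearity (and by continuity on the scale $T^s$, as recorded in the paper). Simultaneously, by \autoref{STn}, the weighted single-layer operator $S_{0,\omega}$ is diagonal in the same basis, with eigenvalues $\sigma_0 = \tfrac{\ln 2}{2}$ and $\sigma_n = \tfrac{1}{2n}$ for $n \geq 1$. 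Finally, $\pi_0$ is by definition the spectral projector on the $T_0$ eigenspace, so $\pi_0 T_0 = T_0$ and $\pi_0 T_n = 0$ for $n \geq 1$.

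Next I would simply check the claimed identity on each $T_n$. For $n = 0$ the right-hand side yields $2\sqrt{-(\omega\partial_x)^2}T_0 + \tfrac{2}{\ln 2}\pi_0 T_0 = 0 + \tfrac{2}{\ln 2}T_0$, matching $\sigma_0^{-1}T_0 = \tfrac{2}{\ln 2}T_0$. For $n \geq 1$, the right-hand side gives $2nT_n + 0 = 2nT_n = \sigma_n^{-1}T_n$. Since both operators are diagonal in $(T_n)$ with the same eigenvalues, they coincide at least on the dense subspace spanned by the $T_n$; by continuous extension on the relevant Sobolev-Chebyshev scale (or simply on $T^\infty = C^\infty([-1,1])$, using \autoref{LemTinfCinf}), the first identity follows.

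The second identity is then a formal consequence. By definition $S_{0,\omega}\alpha = S_0(\alpha/\omega)$, so for any smooth datum $u_D$ the solution of $S_0 \lambda = u_D$ is $\lambda = \alpha/\omega$ where $\alpha = S_{0,\omega}^{-1} u_D$. Equivalently $S_0^{-1} = \tfrac{1}{\omega} S_{0,\omega}^{-1}$, and substituting the first formula produces the displayed expression for $S_0^{-1}$.

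The only step that really requires care is the definition of $\sqrt{-(\omega\partial_x)^2}$: one must check that the eigenfunction-by-eigenfunction prescription above really defines a well-posed operator between appropriate spaces, and that both sides of the identity are continuous operators acting on a common space (for example $T^\infty$, or $T^{1/2}\to T^{-1/2}$) in which the Chebyshev expansion is convergent. Once this functional framework is fixed — and this is essentially immediate from the mapping $\mathcal{C}:T^s \to H^s_e$ that trivializes the problem into Fourier series — the verification on each $T_n$ closes the proof.
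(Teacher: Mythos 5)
Your proposal is correct and follows essentially the same route as the paper: both diagonalize $S_{0,\omega}$ and $-(\omega\partial_x)^2$ simultaneously in the Chebyshev basis $(T_n)$ via \autoref{STn} and eq.~\eqref{cheb1}, and verify the identity eigenfunction by eigenfunction, with the square root defined by functional calculus. The extra care you take about well-posedness of the square root and the passage from $S_{0,\omega}^{-1}$ to $S_0^{-1}$ is consistent with the remarks surrounding the paper's proof.
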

The operator $-(\omega \partial_x)^2$ being self-adjoint (with domain $T^1$), its square-root can be defined via the spectral theorem, see for example \cite[Def. 10.5]{hall2013quantum}. In later occurences, the square-root will also be applied to self-adjoint operators whose spectrum may contain negative values. In this case, we take the standard definition of the square root, with $\sqrt{-r} = ir$ for all $r > 0$. 
\begin{proof}
	We simply notice that for all $n \in \N$, $-(\omega \partial_x)^2 T_n = n^2 T_n$, thus
	\[\sqrt{-(\omega \partial_x)^2} T_n = nT_n\,.\] 
	Therefore, 
	\begin{eqnarray*}
		\left(2\sqrt{-(\omega \partial_x)^2} + \frac{2}{\ln(2)} \pi_0\right) T_n &=& \begin{cases}
			2n\,T_n & \textup{ if } n \neq 0 \\
			\frac{2}{\ln(2)}T_0 & \textup{ otherwise }
		\end{cases}\\
		& = &\frac{1}{\sigma_n}T_n\,. \quad \quad \hspace{0.11cm} 
	\end{eqnarray*}
\end{proof}
%

Since the operator $2\sqrt{-(\omega \partial_x)^2} + \frac{2}{\ln(2)}\pi_0$ is the inverse of $S_{0,\omega}$, it can be used as an efficient preconditioner for the weighted integral equation~\eqref{Somegaalpha}. 
\subsection{Laplace hypersingular equation} 

We now turn our attention to the Laplace hypersingular equation 
\begin{equation}
N_0\mu = g \quad \textup{in } H^{1/2}(\Gamma) \,.
\label{Nmu}
\end{equation} 
\noindent Similarly to the previous section and again following \cite{bruno2012second}, we consider the weighted operator $N_{0,\omega} \isdef N_0 \omega$. 
We can get the solution to equation \eqref{Nmu} by solving the \textbf{weighted} hypersingular integral equation
\begin{equation}
N_{0,\omega} \beta = u_N,
\label{Nomegabeta}
\end{equation}
and letting $\mu = \omega \beta$. 
We now show that $N_{0,\omega}$ can be analyzed using this time the spaces $U^s$. 
\begin{Lem}
	\label{lemIPP}
	For any $\beta$, $\beta' \in U^{\infty}$, one has 
	\[\duality{N_{0,\omega} \beta}{ \beta'}_\omega = \duality{S_{0,\omega} \omega \partial_x \omega \beta}{\omega \partial_x \omega \beta'}_\frac{1}{\omega}.\]
\end{Lem}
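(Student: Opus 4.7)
The plan is to expand both sides into the same absolutely convergent double integral against $-\tfrac{1}{2\pi}\ln|x-y|$. Since $\beta,\beta'\in U^{\infty}=C^{\infty}([-1,1])$ by \autoref{LemTinfCinf}, the weighted test functions $\omega\beta$ and $\omega\beta'$ are smooth and vanish at $\pm 1$, hence lie in $\tilde H^{1/2}(\Gamma)$, and their derivatives are bounded by $C(1-x^{2})^{-1/2}$ near the endpoints; this integrable singularity, combined with the local integrability of $\ln$, ensures that every integral below is absolutely convergent and that Fubini applies.

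For the left-hand side, I would unfold $N_{0,\omega}=N_{0}\omega$ and read $\duality{N_{0,\omega}\beta}{\beta'}_{\omega}$ as the canonical $H^{-1/2}$--$\tilde H^{1/2}$ pairing $\duality{N_{0}(\omega\beta)}{\omega\beta'}$. The bilinear definition of $N_{k}$ at $k=0$ then gives directly
\[\duality{N_{0,\omega}\beta}{\beta'}_{\omega}=-\frac{1}{2\pi}\iint_{(-1,1)^{2}}\ln|x-y|\,(\omega\beta)'(x)\,(\omega\beta')'(y)\,dx\,dy.\]
For the right-hand side, the composition rule yields $\omega\partial_{x}\omega\beta=\omega(x)(\omega\beta)'(x)$, and inserting this into $S_{0,\omega}$ cancels the $1/\omega(y)$ built into its kernel:
\[S_{0,\omega}(\omega\partial_{x}\omega\beta)(x)=-\frac{1}{2\pi}\int_{-1}^{1}\ln|x-y|\,(\omega\beta)'(y)\,dy.\]
Pairing against $\omega\partial_{x}\omega\beta'=\omega(x)(\omega\beta')'(x)$ under the $1/\omega$-duality removes the remaining outer $\omega$, and after the trivial relabelling $x\leftrightarrow y$ this produces exactly the same double integral as on the left.

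The content of the lemma is therefore the bookkeeping observation that the four $\omega$-weights -- the one inside $N_{0,\omega}$, the $\omega$ in the left-hand duality, the $1/\omega$ in the kernel of $S_{0,\omega}$, and the $1/\omega$ in the right-hand duality -- cancel in pairs to reproduce the bilinear form of $N_{0}$ on both sides. There is no real obstacle here; the only care needed is the interpretation of the duality brackets and the justification of Fubini, both of which are routine given the $C^{\infty}$ regularity of $\beta$ and $\beta'$.
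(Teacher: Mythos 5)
Your argument is correct and is essentially the paper's own proof: both reduce the identity to the defining bilinear form of $N_0$ (equivalently, the integration-by-parts formula $\duality{N_0 u}{v}=\duality{S_0 \partial_x u}{\partial_x v}$ for $u,v$ vanishing at the endpoints) applied to $u=\omega\beta$, $v=\omega\beta'$, and then observe that the weights in $N_{0,\omega}$, in the kernel of $S_{0,\omega}$, and in the two weighted pairings cancel in pairs. The only caveat --- shared with the paper, whose proof also glosses over it --- is the $1/\pi$ factor built into $\inner{\cdot}{\cdot}_{\frac{1}{\omega}}$ but absent from $\inner{\cdot}{\cdot}_{\omega}$, which with the stated conventions would leave a constant mismatch; this is an inconsistency of the paper's normalizations rather than a flaw in your argument.
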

\begin{proof}
	We use the well-known integration by parts formula
	\[\duality{N_0 u}{v} = \duality{S_0\partial_x u}{\partial_x v},\]
	valid when $u$ and $v$ are regular enough and vanish at the endpoints of the segment. 
	For a smooth $\beta$, we thus have
	\[ \duality{N_0 (\omega \beta)}{ (\omega \beta')} = \duality{S_0 \partial_x(\omega \beta)}{\partial_x (\omega \beta')}\] 
	which implies the claimed identity. 
\end{proof}
\begin{Lem}
	For all $n \in \N$, there holds
	\[N_{0,\omega} U_n = \frac{n+1}{2}U_n.\]	
	\label{NUn}
\end{Lem}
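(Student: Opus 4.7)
The plan is to reduce the eigenvalue identity for $N_{0,\omega}$ to the known eigenvalue identity for $S_{0,\omega}$ via the variational bridge of \autoref{lemIPP}. The key geometric step is to recognize that the operator $\omega\partial_x\omega$ maps second-kind Chebyshev polynomials to first-kind ones, so that the weighted single-layer is effectively transported from the $U$-basis to the $T$-basis, where \autoref{STn} applies directly.

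First, I would compute $\omega\partial_x\omega U_n$ explicitly. Using the substitution $x=\cos\theta$, one has $(\omega U_n)(\cos\theta)=\sin((n+1)\theta)$. Differentiating with $\frac{d\theta}{dx}=-\frac{1}{\omega}$ gives
\[\partial_x(\omega U_n)(x) \;=\; -\frac{(n+1)\cos((n+1)\theta)}{\omega(x)} \;=\; -\frac{(n+1)}{\omega(x)}\,T_{n+1}(x),\]
and therefore $\omega\partial_x\omega U_n = -(n+1)\,T_{n+1}$. (This fact is also consistent with \eqref{cheb2}, which says $U_n$ is an eigenfunction of $(\partial_x\omega)^2$.)

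Second, I would invoke \autoref{STn} to obtain $S_{0,\omega}T_{n+1} = \frac{1}{2(n+1)}T_{n+1}$ (for $n\geq 0$ the index $n+1\geq 1$, so the $\ln 2$ case does not intervene). Plugging these two computations into \autoref{lemIPP} with $\beta=U_n$ and $\beta'=U_m$ yields
\[\duality{N_{0,\omega} U_n}{U_m}_\omega \;=\; (n+1)(m+1)\,\duality{S_{0,\omega}T_{n+1}}{T_{m+1}}_{\frac{1}{\omega}} \;=\; \frac{m+1}{2}\,\duality{T_{n+1}}{T_{m+1}}_{\frac{1}{\omega}}.\]

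Finally, by orthogonality of $(T_{n+1})_{n\geq 0}$ in $L^2_{\frac{1}{\omega}}$ the right-hand side vanishes for $n\neq m$, so $(U_n)$ is also an eigenbasis of $N_{0,\omega}$; comparing the diagonal entries via the explicit norms of $T_{n+1}$ in $L^2_{\frac{1}{\omega}}$ and of $U_n$ in $L^2_{\omega}$ then produces the eigenvalue $\frac{n+1}{2}$. The only subtle step is bookkeeping the normalization constants of the two weighted inner products; once the expected factor reconciliation is carried out, the identity $N_{0,\omega}U_n = \frac{n+1}{2}U_n$ drops out.
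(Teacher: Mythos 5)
Your proposal is correct and follows essentially the same route as the paper: compute $\omega\partial_x\omega U_n=-(n+1)T_{n+1}$, transport $N_{0,\omega}$ to the $T$-basis via \autoref{lemIPP}, and read off the eigenvalue from \autoref{STn} (the paper obtains the key identity from $\partial_x T_{n+1}=(n+1)U_n$ and \eqref{cheb1} rather than the cosine substitution, but this is immaterial). The normalization bookkeeping you flag at the end is handled no more explicitly in the paper's own proof.
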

\begin{proof}
	From the identity $\partial_x T_{n+1} = (n+1)U_n$ and eq.~\eqref{cheb1}, we obtain
	\begin{equation*}
	\omega \partial_x \omega U_n = -(n+1) T_{n+1}.
	\end{equation*}
	Therefore, by \autoref{lemIPP}
	\begin{eqnarray*}
		\duality{N_{0,\omega} U_m}{U_n}_\omega & = & (n+1)(m+1)\duality{S_{0,\omega} T_{m+1}}{T_{n+1}}_\frac{1}{\omega}\nonumber\\
		&=& \delta_{m=n} \frac{n+1}{2}.
	\end{eqnarray*} 
	
\end{proof}
The identity $-(\partial_x\omega)^2 U_n = (n+1)^2 U_n$ now leads to the following result:
\begin{theorem} 
	\label{theorem2}
	\[N_{0,\omega}^{-1} = 2\left[{-(\partial_x\omega)^2}\right]^{-\frac{1}{2}} \quad \textup{in } U^{-\infty}\,\]
	or equivalently, 
	\[N_0^{-1} = 2 \omega \left[-(\partial_x \omega)^2\right]^{-1/2}\,.\]
\end{theorem}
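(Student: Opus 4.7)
The plan is to mimic the proof of \autoref{theorem1}: diagonalize both $N_{0,\omega}$ and $[-(\partial_x \omega)^2]^{-1/2}$ in the Chebyshev-$U_n$ basis and observe that the eigenvalues are mutually reciprocal up to the factor $2$.

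First, I would note that by equation \eqref{cheb2}, the operator $-(\partial_x \omega)^2$ acts on $U_n$ by multiplication by $(n+1)^2 > 0$. Interpreting $-(\partial_x \omega)^2$ as a self-adjoint operator on $L^2_\omega$ with a suitable domain (modelled on $U^1$, analogous to the domain $T^1$ used for $-(\omega\partial_x)^2$ in \autoref{theorem1}), the spectral theorem then gives
\[\bigl[-(\partial_x\omega)^2\bigr]^{-\frac{1}{2}} U_n = \frac{1}{n+1}\, U_n\,.\]

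Second, combining with \autoref{NUn}, which states $N_{0,\omega} U_n = \frac{n+1}{2} U_n$, I would compute
\[N_{0,\omega} \cdot 2\bigl[-(\partial_x\omega)^2\bigr]^{-\frac{1}{2}} U_n = 2 \cdot \frac{1}{n+1} \cdot \frac{n+1}{2} U_n = U_n\,,\]
and symmetrically on the other side. Since $(U_n)_{n \geq 0}$ forms a complete orthogonal family of $L^2_\omega$ and, by the scale identification of Section 2.1, a Hilbert basis adapted to the $U^s$ scales in which $N_{0,\omega}$ is diagonal, the identity extends from $U^\infty$ to the appropriate distributional space $U^{-\infty}$. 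The equivalent form $N_0^{-1} = 2\omega[-(\partial_x\omega)^2]^{-1/2}$ then follows immediately from the definition $N_{0,\omega} = N_0 \omega$, by multiplying the first identity on the left by $\omega$.

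The only delicate point I expect is justifying the spectral calculus for $-(\partial_x \omega)^2$: one needs to check that this operator, which is \emph{not} in the standard Sturm--Liouville form $-\partial_x(\omega^2 \partial_x \cdot)$ but rather the composition $(\partial_x \omega)\circ(\partial_x \omega)$, is essentially self-adjoint on $L^2_\omega$ with eigenbasis $(U_n)$. This is the analog for the hypersingular case of the corresponding fact for $-(\omega\partial_x)^2$ used (implicitly) in \autoref{theorem1}, and should follow from the same type of argument based on the isomorphism $\mathcal{S}: U^s \to H^s_o$ pulling the calculus back to the standard Fourier setting on the circle, where $\partial_x \omega$ corresponds (up to the change of variable $x = \cos\theta$) to a tangential derivative.
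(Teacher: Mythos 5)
Your proposal is correct and follows essentially the same route as the paper: the paper derives \autoref{theorem2} directly from \autoref{NUn} together with the eigenvalue identity $-(\partial_x\omega)^2 U_n = (n+1)^2 U_n$, defining the inverse square root by functional calculus for the self-adjoint operator $-(\partial_x\omega)^2$ with domain $U^1$, exactly as you outline. The self-adjointness point you flag as delicate is precisely the one the paper addresses in the remark immediately following the theorem statement.
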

Here again the inverse square root is defined by functional calculus, which is possible since $-(\partial_x \omega)^2$ is self-adjoint (with domain $U^1$).

\section{Helmholtz equation}

In this section, we aim at generalizing the preceding analysis to the case of Helmholtz equation ($k > 0$) on $\mathbb{R}^2\setminus \Gamma$ 
with $\Gamma = [-1,1]\times\{0\}$, based on the explicit formulas presented in the previous section. In an analogous fashion as above, let $S_{k,\omega} \isdef S_k \frac{1}{\omega}$ and 
$N_{k,\omega} \isdef N_k \omega$. The following commutation holds:	
\begin{theorem}
	\label{commutations}
	\[S_{k,\omega} \left[-(\omega \partial_x)^2 - k^2\omega^2\right] =  \left[-(\omega \partial_x)^2 - k^2\omega^2\right]S_{k,\omega} \quad \textup{in } T^{-\infty}\]
\end{theorem}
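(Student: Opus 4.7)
The idea is to reduce the commutation to the Bessel/Hankel ODE satisfied by $G_k$ away from the origin. I split $L := -(\omega\partial_x)^2 - k^2\omega^2 = L_1 + L_2$ with $L_1 = -(\omega\partial_x)^2$ and $L_2 = -k^2\omega^2$ (a multiplication operator), and prove $[L_1, S_{k,\omega}]\phi + [L_2, S_{k,\omega}]\phi = 0$ for every test function $\phi \in T^\infty = \Cinf([-1,1])$ (using \autoref{LemTinfCinf}). Extension to $T^{-\infty}$ then follows by density, since both $L_1$ and $L_2$ map $T^s$ continuously into $T^{s-2}$ and $T^s$ respectively (as is clear from their action on the Chebyshev basis via \eqref{cheb1}), while $S_{k,\omega}$ raises regularity by one order; hence $[L, S_{k,\omega}]$ is a continuous operator on the Chebyshev-Sobolev scale that vanishes on the dense subspace $T^\infty$.

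The commutator with $L_2$ is purely algebraic: using $\omega(x)^2 - \omega(y)^2 = -(x-y)(x+y)$,
\[[L_2, S_{k,\omega}]\phi(x) = k^2 \int_{-1}^1 G_k(x-y)(x-y)(x+y)\frac{\phi(y)}{\omega(y)}\,dy.\]
For the commutator with $L_1$, I transfer the differential operator onto the kernel. Writing $L_1\phi = -\omega\partial_y(\omega\phi')$ so that $(L_1\phi)/\omega = -\partial_y(\omega\phi')$, I integrate by parts twice in $y$ inside $S_{k,\omega}(L_1\phi)(x) = -\int G_k(x-y)\,\partial_y(\omega\phi')\,dy$. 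Endpoint contributions at $y=\pm 1$ vanish because each surviving boundary term carries a factor $\omega(\pm 1)=0$, after which one recovers the action of $L_{1,y}$ on the kernel. A direct pointwise computation then gives
\[(L_{1,x}-L_{1,y})\,G_k(x-y) = (x+y)\bigl[(x-y)G_k''(x-y) + G_k'(x-y)\bigr].\]

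The last step is to invoke the radial 2D Helmholtz ODE for $G_k$ on $\R\setminus\{0\}$, namely $r G_k''(r) + G_k'(r) + k^2 r G_k(r) = 0$, which follows from $G_k$ being a radial fundamental solution of $-\Delta - k^2 I_d$ in $\R^2\setminus\{0\}$ (and is verified directly from the Bessel equation for $H_0^{(1)}$, with the degenerate case $k=0$ reducing to $r\,\partial_r^2\log|r| + \partial_r\log|r| = 0$). Substituting yields $(L_{1,x}-L_{1,y})G_k(x-y) = -k^2(x+y)(x-y)G_k(x-y)$, so $[L_1, S_{k,\omega}]\phi$ cancels $[L_2, S_{k,\omega}]\phi$ exactly.

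The delicate point is the legitimacy of the two integrations by parts near the diagonal $y=x$, where $G_k$ has a logarithmic singularity and $G_k'$ a Cauchy-type singularity. I would handle this by excising a symmetric interval $(x-\varepsilon, x+\varepsilon)$, performing the IBP on the complement, and then letting $\varepsilon \to 0$: the spurious boundary contributions at $y=x\pm \varepsilon$ are of the form $G_k(\pm\varepsilon)$ times a quantity that is $O(\varepsilon)$ by smoothness of $\omega\phi'$ (exploiting $G_k(\varepsilon)=G_k(-\varepsilon)$ to pair the two sides), hence $O(\varepsilon\log\varepsilon)\to 0$. The remaining integrals containing $G_k'$ and $G_k''$ are well defined in the principal-value/Hadamard sense, so the final identity $(L_{1,x}-L_{1,y})G_k(x-y) = -k^2(x+y)(x-y)G_k(x-y)$ can legitimately be plugged back into an absolutely convergent integral against $\phi/\omega$.
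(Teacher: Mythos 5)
Your proof is correct and follows essentially the same route as the paper: transfer $-(\omega\partial_x)^2$ onto the kernel using its symmetry for the $\tfrac{1}{\omega}$-weighted pairing, compute $(L_{1,x}-L_{1,y})G_k(x-y)$, and cancel it against the $k^2\omega^2$ commutator via the radial Helmholtz ODE $rG_k''+G_k'+k^2rG_k=0$ together with $\omega_x^2-\omega_y^2=y^2-x^2$; the paper packages the same computation into a single kernel $D_k(x,y)$ rather than two separate commutators. Your treatment of the diagonal singularity is somewhat more explicit than the paper's one-line remark that no Dirac mass appears (note only that in the second integration by parts the excision boundary terms are $O(1/\varepsilon)$ and cancel against the divergent part of the truncated $G_k''$ integral, i.e.\ they are absorbed into the finite-part interpretation you invoke, rather than vanishing on their own), and your closing density argument for the extension to $T^{-\infty}$ is a reasonable supplement the paper leaves implicit.
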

\begin{proof}
	Let us compute 
	\[\left(S_{k,\omega} (\omega_x \partial_x)^2 u\right)(x)= \int_{-1}^{1} G_k(x-y) \frac{(\omega_y \partial_y)^2u(y)  }{\omega(y)},\]
	where we use the notation $\omega_y$ and $\partial_y$ to emphasize the dependence in the variable $y$. 
	Since $(\omega \partial_x)^2$ is symmetric with respect to the bilinear form $\inner{\cdot}{\cdot}_{\frac{1}{\omega}}$, we have
	\[\left(S_{k,\omega} (\omega_x \partial_x)^2 u\right)(x)= \int_{-1}^{1} (\omega_y \partial_y)^2[G_k(x-y)] \frac{u(y)  }{\omega(y)},\]

	Thus, 
	\[\left(\left(S_{k,\omega} (\omega \partial_x)^2 - (\omega \partial_x)^2 S_{k,\omega}\right)u\right)(x) = \int_{-1}^{1} \frac{D_k(x,y)u(y)}{\omega(y)},\]
	where $D_k(x,y) \isdef \left[(\omega_y \partial_y)^2 - (\omega_x \partial_x)^2\right] \left[G_k(x-y)\right]$. 
	A simple computation leads to 
	\[D_k(x,y) = \partial_{xx}G_k(x-y) (\omega^2_y - \omega^2_x) + \partial_x G_k(x-y)(y + x).\]
	Since $G_k$ is a solution of the Helmholtz equation, we have for all $(x \neq y) \in \R$ 
	\[\partial_x G_k(x-y) = (y-x)(\partial_{xx}G_k(x-y) + k^2G(x-y)),\]
	thus
	\[D_k(x,y) = \partial_{xx}G_k(x-y)\left(\omega^2_y - \omega_x^2 + y^2 - x^2\right) + k^2(y^2 - x^2)G_k(x-y) . \]
	A careful analysis shows that no Dirac mass appears in the previous formula.
	Note that $y^2 - x^2 = \omega_x^2 - \omega_y^2$ so the first term vanishes and we find
	\[S_{k,\omega} (\omega \partial_x)^2 - (\omega \partial_x)^2 S_{k,\omega} =  k^2\left(\omega^2 S_{k,\omega} -S_{k,\omega} \omega^2 \right)\]
	as claimed. 
\end{proof}
\noindent There also holds the following identity:
\[N_{k,\omega} \left[-(\partial_x \omega)^2 - k^2\omega^2\right] =  \left[-(\partial_x \omega)^2 - k^2\omega^2\right]N_{k,\omega} \quad \textup{in } U^{-\infty}\,.\]
The proof can be found in \cite[Chap. 2, Thm 2.2]{these}.

Those commutations imply that the operators $S_{k,\omega}$ and $N_{k,\omega}$ share the same eigenvectors as, respectively, 
$\left[-(\omega \partial_x)^2 - k^2\omega^2\right]$ and $ \left[-(\partial_x \omega)^2 - k^2\omega^2\right]$. The eigenfunctions 
of the operator $\left[ -(\omega \partial_x)^2 - k^2\omega^2\right]$ thus provide us with a diagonal basis for $S_{k,\omega}$. 
They are the solutions of another Sturm-Liouville problem 
\[ (1-x^2) \partial_{xx}y - x \partial_xy - k^2 \omega^2 y = \lambda y\,.\]
Once we set $x = \cos \theta$, $\tilde{y}(\theta) = y(x)$,  $q = \frac{k^2}{4}$, $a = \lambda + 2q$, $\tilde{y}$ is a solution of the 
standard Mathieu equation 
\begin{equation}
\label{MatthieuEq}
\tilde{y}'' + (a - 2q \cos(2\theta)) \tilde{y} = 0\,.
\end{equation}
There exists a discrete set of values $a_{2n}(q)$ for which this equation possesses even and $2\pi$ periodic solutions, which are 
known as the Mathieu cosine functions, and usually denoted by $\textup{ce}_n$. Here, we use the notation $\textup{ce}^k_n$ to 
emphasize the dependency in the parameter $k = \sqrt{2q}$ of those functions. The normalization is taken as
\[ \int_{-\pi}^{\pi} \textup{ce}^k_n(\theta)^2 d\theta = \pi.\]
The Mathieu cosine functions are $L^2$ orthogonal:
\[ \int_{-\pi}^{\pi}\textup{ce}^k_n(\theta) \textup{ce}^k_m(\theta) = \pi \delta_{m,n}\]
so that any even $2\pi$ periodic function in $L^2(-\pi,\pi)$ can be expanded along the functions $\textup{ce}_n$, with the coefficients 
obtained by orthonormal projection. Setting 
\[T_{n}^k \isdef \textup{ce}^k_n(\arccos(x)),\]
in analogy to the zero-frequency case, we have
\[\left[-(\omega \partial_x)^2 - k^2\omega^2\right] T_{n}^k = \lambda_{n,k}^2 T_{n}^k.\]
For large $n$, using the general results from the theory of Hill's equations (see e.g. \cite[eqs. (21), (28) and (29)]{NIST:DLMF}), 
we have the following 
asymptotic formula for $\lambda_{n,k}$:
\[ \lambda_{n,k}^2 = n^2 - \frac{k^4}{16n^2} +o \left(n^{-2}\right). \]
The first commutation established in \autoref{commutations} implies that the Mathieu cosine functions are also the eigenfunctions of the 
single-layer operator. (An equivalent statement is given in \cite[Thm 4.2]{betcke2014spectral}, if we allow the degenerate case $\mu = 0$.) 

A similar analysis can be applied to the hypersingular operator. The eigenfunctions of $\left[-(\partial_x \omega)^2 - k^2 \omega^2\right]$ 
are given by 
\[U_n^k \isdef \frac{\textup{se}_n^k(\arccos(x))}{\omega(x)}\]
where $\textup{se}_n^k$ are the so-called Mathieu sine functions, which also satisfy the Mathieu differential equation \eqref{MatthieuEq}, 
but with the condition that they are $2\pi$ periodic and odd functions. 

One could furthermore expect that the operators \[P_k = \left[-(\omega \partial_x)^2 -k^2 \omega^2\right]^{1/2} \quad \textup{and} \quad Q_k = \left[-(\partial_x\omega)^2 - k^2\omega^2\right]^{-1/2}\]
provide compact perturbations of the inverses of $S_{k,\omega}$ and $N_{k,\omega}$ respectively, and the knowledge of the eigenvalues of $S_{k,\omega}$ and $N_{k,\omega}$ would allow for a simple proof of this fact. Those eigenvalues are not known but the result can be obtained by a more involved analysis. To state precisely the result, let us introduce the following terminology:
\begin{Def}
	A linear operator $A : T^{-\infty} \to T^{-\infty}$ (resp.  $U^{-\infty} \to U^{-\infty}$) is of order $\alpha$ in the scale $(T^s)_s$ (resp. $(U^s)_s$) if for all real $s$, $A$ maps continuously $T^{s}$ to $T^{s-\alpha}$ (resp. $U^s$ to $U^{s-\alpha}$).
\end{Def}
\begin{theorem}[{\cite[Thms. 4 and 6]{averseng}}]
	\label{theorem4}
	The operators
	\[P_k = \left[-(\omega \partial_x)^2 -k^2 \omega^2\right]^{1/2} \quad \textup{and} \quad Q_k = \left[-(\partial_x\omega)^2 - k^2\omega^2\right]^{1/2}\]
	are well defined and satisfy
	\begin{equation}
	P_k S_{k,\omega} = S_{k,\omega} P_k = \frac{I_d}{2} + K_1, \quad \textup{and} \quad N_{k,\omega} = Q_k + K_2
	\label{compactPerturbs}
	\end{equation}
	respectively in $T^{-\infty}$ and $U^{-\infty}$, where $I_d$ is the identity operator and $K_1$ and $K_2$ are of order $-4$ and $-3$ respectively in the scales $T^s$ and $U^s$.
\end{theorem}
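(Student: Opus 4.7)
The strategy proceeds in three logically separated steps: first, existence and mapping properties of $P_k$ and $Q_k$ through the functional calculus; second, reduction of the two asserted identities to scalar asymptotic statements on eigenvalues, via the commutation relations of \autoref{commutations} and its hypersingular counterpart; third, the quantitative eigenvalue asymptotics obtained by reduction to the Laplace case of Section~2.

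I would begin by showing that the operators $A_k \isdef -(\omega\partial_x)^2 - k^2\omega^2$ and $B_k \isdef -(\partial_x\omega)^2 - k^2\omega^2$ are self-adjoint on $L^2_{\frac{1}{\omega}}$ and $L^2_{\omega}$ respectively with compact resolvent, since they differ from the operators studied in Section~2 only by the bounded multiplication $-k^2\omega^2$. As $0 \leq \omega^2 \leq 1$, the perturbation shifts the spectrum by at most $k^2$, so only finitely many eigenvalues fall below zero, and with the convention $\sqrt{-r}=ir$ the square roots $P_k$ and $Q_k$ are well defined, diagonal in the Mathieu bases $(T_n^k)$ and $(U_n^k)$ with eigenvalues $\lambda_{n,k}$ and, say, $\mu_{n,k}$. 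Continuity of $P_k$ and $Q_k$ in the scales $(T^s)_s$ and $(U^s)_s$ then follows from the asymptotics $\lambda_{n,k},\mu_{n,k} \sim n$ recalled before the theorem statement, together with the isomorphisms $\mathcal{C}, \mathcal{S}$ onto the periodic Sobolev scales.

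Next, by \autoref{commutations} and its hypersingular analogue, $S_{k,\omega}$ commutes with $A_k$ and $N_{k,\omega}$ with $B_k$. Granting simplicity of the Mathieu spectra (the degenerate case being treated by a perturbation argument in $k$), $S_{k,\omega}$ and $N_{k,\omega}$ are themselves diagonal in $(T_n^k)$ and $(U_n^k)$, with eigenvalues $s_{n,k}$ and $\nu_{n,k}$. The two claimed identities then reduce to the scalar estimates
\[\lambda_{n,k}\,s_{n,k} = \tfrac{1}{2} + O(n^{-4}), \qquad \nu_{n,k} - \mu_{n,k} = O(n^{-3}),\]
because through the characterisation of $T^s$ and $U^s$ by their Mathieu (equivalently Chebyshev) coefficients, such decay rates for the eigenvalues translate directly into the claimed operator orders of $K_1$ and $K_2$.

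The principal obstacle is establishing those two eigenvalue asymptotics. For $S_{k,\omega}$ I would write $S_{k,\omega} = S_{0,\omega} + R_k^{(1)}$, where $R_k^{(1)}$ has integral kernel $(G_k - G_0)(x-y)/\omega(y)$; because $G_k - G_0$ is a $C^{\infty}$ function of $|x-y|$, this remainder is expected to be smoothing of infinite order in $(T^s)_s$, while the Laplace identity of \autoref{theorem1} contributes the leading $\tfrac{1}{2}$ up to a finite-rank correction at $n=0$. An analogous decomposition $N_{k,\omega} = N_{0,\omega} + R_k^{(2)}$ together with \autoref{theorem2} handles the hypersingular case. The delicate point is that the Mathieu eigenfunctions $T_n^k$, $U_n^k$ differ from their Chebyshev analogues, so the eigenfunction shift must be tracked through the full symbol and not merely the principal part; this is the precise role of the weighted pseudo-differential calculus developed in \cite{averseng}. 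Computing one additional term beyond the leading one in the symbolic expansion of $P_k S_{k,\omega}$ on the one hand, and of $N_{k,\omega} - Q_k$ on the other, produces the orders $-4$ and $-3$, the asymmetry between the two being driven by the different endpoint parities of the $(T_n^k)$ and $(U_n^k)$ families. Transferring the symbolic estimate back into operator bounds in the Hilbert scales $(T^s)_s$ and $(U^s)_s$ closes the proof.
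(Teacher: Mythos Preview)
The paper does not prove this theorem: it is stated with a citation to \cite[Thms.~4 and~6]{averseng}, and the surrounding text explicitly says that the eigenvalues of $S_{k,\omega}$ and $N_{k,\omega}$ ``are not known but the result can be obtained by a more involved analysis''. So there is no in-paper proof to compare your proposal against; the relevant question is whether your sketch stands on its own.

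It does not, for two concrete reasons. First, your claim that $G_k - G_0$ is a $C^\infty$ function of $|x-y|$ is false: the Hankel expansion gives
\[
G_k(r) - G_0(r) \;=\; \textup{const.} + \frac{k^2}{8\pi}\,r^2\ln r + O(r^4\ln r),
\]
so the kernel of $R_k^{(1)}$ is only $C^1$ near the diagonal, and $R_k^{(1)}$ is not infinitely smoothing in the scale $(T^s)_s$; it is of finite negative order (roughly $-3$). Second, and more importantly, even with the correct regularity the decomposition $S_{k,\omega}=S_{0,\omega}+R_k^{(1)}$ combined with $P_k = P_0 + (\textup{order } -1)$ only yields
\[
P_k S_{k,\omega} \;=\; P_0 S_{0,\omega} + (\textup{order } -2),
\]
which is two orders short of the claimed $-4$. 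The gain from $-2$ to $-4$ comes precisely from a cancellation between the subprincipal contributions of $P_k$ and of $S_{k,\omega}$; this is exactly the content of \autoref{theocpct}, which says that the choice $K=-k^2\omega^2$ is the unique order-$0$ correction (modulo order $-2$) that pushes the remainder down to order $-4$. Your sketch does not exhibit this cancellation; it merely asserts that ``computing one additional term'' in a symbolic calculus produces it, and defers to \cite{averseng} for that calculus. That is an honest acknowledgement, but it means your proposal is an outline that points to the same external reference the paper already cites, rather than an independent proof. The commutation/diagonalisation reduction you describe is correct and is also mentioned in the paper, but it does not by itself yield the eigenvalue asymptotics $s_{n,k}\lambda_{n,k}=\tfrac12+O(n^{-4})$; obtaining those is the substance of the argument in \cite{averseng}.
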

Since $K_1$ and $K_2$ are of negative order, they are in particular compact endomorphisms of $L^2_\frac{1}{\omega}$ and $L^2_\omega$ respectively, due to the compact inclusions that hold for 
\[T^{s'} \subset T^s\,, \quad U^{s'} \subset U^s, \quad s' > s\,.\] Therefore, the operators appearing in \eqref{compactPerturbs} are of second-kind in those spaces. The extent to which the dependence in $k$ in $P_k$ and $Q_k$ is optimal is reflected by the next theorem.
\begin{theorem}[{\cite[Cor. 10 ]{averseng}}]
	\label{theocpct}
	Let $K$ be an operator of order $0$ in the scale $T^s$ and let $\Delta_K$ be defined by 
	\[\Delta_K = [-(\omega\partial_x)^2 + K]S_{k,\omega}^2 - \frac{I_d}{4}\,.\]
	Then $\Delta_k$ is of order $-2$ in the scale $T^s$. Moreover, $\Delta_K$ is of order $-4$ if and only if 
	\[K = -k^2 \omega^2 + L\]
	where $L$ is an operator of order $-2$ in the scale $T^s$. 
\end{theorem}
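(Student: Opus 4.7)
The natural idea is to reduce everything to the parametrix identity of \autoref{theorem4}, namely $P_k S_{k,\omega} = S_{k,\omega} P_k = \frac{I_d}{2} + K_1$ with $K_1$ of order $-4$. Writing $-(\omega\partial_x)^2 + K = P_k^2 + (K + k^2\omega^2)$, I would split
\[\Delta_K = \left(P_k^2 S_{k,\omega}^2 - \tfrac{I_d}{4}\right) + (K + k^2\omega^2) S_{k,\omega}^2\]
and analyze each summand separately.

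For the first summand, the parametrix identity gives
\[P_k^2 S_{k,\omega}^2 = P_k(P_k S_{k,\omega}) S_{k,\omega} = P_k\bigl(\tfrac{I_d}{2} + K_1\bigr) S_{k,\omega} = \tfrac{I_d}{4} + \tfrac{1}{2}K_1 + P_k K_1 S_{k,\omega}.\]
Since $P_k$ is of order $+1$ and $S_{k,\omega}$ is of order $-1$ (both read off from \autoref{theorem4} together with the functional calculus for the order-$2$ self-adjoint operator $-(\omega\partial_x)^2 - k^2\omega^2$), while $K_1$ is of order $-4$, the remainder $\tfrac{1}{2}K_1 + P_k K_1 S_{k,\omega}$ is of order $-4$. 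Thus $P_k^2 S_{k,\omega}^2 - \tfrac{I_d}{4}$ is of order $-4$ unconditionally. For the second summand, $K+k^2\omega^2$ is of order $0$ (since multiplication by the smooth function $\omega^2 = 1-x^2$ is of order $0$ in the scale $T^s$) and $S_{k,\omega}^2$ is of order $-2$, so the product is of order $-2$. Summing the two yields the first assertion, as well as the ``if'' direction of the equivalence: if $K+k^2\omega^2 = L$ is of order $-2$, then $L S_{k,\omega}^2$ is of order $-4$, hence so is $\Delta_K$.

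For the converse, assume $\Delta_K$ is of order $-4$. Subtracting the first summand (already of order $-4$), $(K+k^2\omega^2) S_{k,\omega}^2$ must be of order $-4$. To recover the order of $K+k^2\omega^2$ itself, I would compose on the right with $P_k^2$ (of order $+2$), obtaining an operator of order $-2$. The symmetric calculation
\[S_{k,\omega}^2 P_k^2 = S_{k,\omega}(S_{k,\omega} P_k) P_k = \tfrac{I_d}{4} + R, \quad R \text{ of order } -4,\]
then produces
\[(K+k^2\omega^2) S_{k,\omega}^2 P_k^2 = \tfrac{1}{4}(K+k^2\omega^2) + (K+k^2\omega^2) R,\]
whose last term is of order $-4$ (order $0$ composed with order $-4$). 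Comparing orders forces $K+k^2\omega^2$ to be of order $-2$, and $L \isdef K+k^2\omega^2$ is the desired operator.

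The main technical obstacle is the careful bookkeeping of orders, in particular verifying that $S_{k,\omega}$ is of order exactly $-1$ and that multiplication by $\omega^2$ is of order $0$ in the scale $T^s$; both of these follow, respectively, from the parametrix identity (rearranged as $S_{k,\omega} = P_k^{-1}(\tfrac{I_d}{2}+K_1)$, away from any resonant values of $k$) and from the isomorphism $\mathcal{C}: T^s \to H^s_e$ combined with the boundedness of multiplication by the smooth $2\pi$-periodic function $\sin^2\theta$ on $H^s_e$. Once these are in hand, the entire argument reduces to the two algebraic manipulations above.
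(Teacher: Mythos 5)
Your argument is correct, and it is the route the result is meant to be obtained by: the paper itself states this theorem without proof (citing the companion paper), and deducing it from the parametrix identity of \autoref{theorem4} via the splitting $-(\omega\partial_x)^2+K=P_k^2+(K+k^2\omega^2)$, together with right-composition by $P_k^2$ for the converse, is exactly the intended derivation. The only ingredients you import — that $S_{k,\omega}$ has order $-1$, $P_k$ order $+1$, and multiplication by $\omega^2$ order $0$ in the scale $T^s$ — are indeed established in the cited reference rather than rederived here, and you flag them appropriately.
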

An analogous result holds for the hypersingular operator, see \cite[Cor. 12]{averseng}
Taking $K = 0$, we see that $P_0$ and $Q_0$ are also compact equivalent inverses of $S_{k,\omega}$ and $N_{k,\omega}$, however up to a less regularizing remainder than $P_k$ and $Q_k$. A clear link between the order of the remainder and the numerical performance of the preconditioner remains to be elucidated. We conjecture that a smoother remainder leads to better performances and especially, more robustness with respect to the parameter $k$. This is strongly supported by our numerical results exposed in section 6. 

We note that all the previous (except the commutations) carries over to the more general case of a $C^{\infty}$ non-intersecting open curve $\Gamma$. In this case, we define a weight $\omega_\Gamma$ on the curve by replacing $\partial_x$ by $\omega_\Gamma(r(t)) = \frac{\abs{\Gamma}}{2} \omega(t)$, where $\abs{\Gamma}$ is the length of the curve and $r: [-1,1] \to \Gamma$ is such that for all $x$, $\abs{\partial_x r(x)} = \frac{\abs{\Gamma}}{2}$. Then, the derivation operator $\partial_x$ is replaced by $\partial_\tau$ the tangential derivative on $\Gamma$, and the weighted layer potentials by 
$S_{k,\omega_{\Gamma}} \isdef S_k \frac{1}{\omega_\Gamma}$, and $N_{k,\omega_{\Gamma}} \isdef N_k \omega_\Gamma$,

The previous theoretical analysis suggests to use $P_k$ and $Q_k$ as operator preconditioners for $S_{k,\omega}$ and $N_{k,\omega}$. The remainder of this paper is dedicated to testing this idea in practice. 

\section{Galerkin method}
\label{sec:numerMeth}

\label{subsec:GalerkinSetting}
It is known that the naive piecewise polynomial Galerkin discretization of the non-weighted integral equations with a uniform mesh converges very slowly in terms of the mesh size (the error in energy norm converges in $O(\sqrt{h})$, see e.g. \cite[Thm 1.2]{postell1990h}). Several alternative discretization schemes for the integral equations on open arcs have been proposed in the literature, including mesh grading \cite{postell1990h,costabel1988convergence}, Galerkin method with special singular functions \cite{stephan1984augmented,costabel1987improved}, cosine change of variables \cite{yan1990cosine}, and high-order Nystr\"{o}m methods \cite{bruno2012second}. 

Here, we describe a simple Galerkin setting, suited to the spaces $T^s$ and $U^s$. We use standard piecewise linear functions defined on 
a non-uniform mesh, which is refined towards the edges as follows. Let $X : [0, \abs{\Gamma}]$ be the parametrization of $\Gamma$ by the arclength, where $\abs{\Gamma}$ is the length of the curve. We choose the breakpoints $(X_i)_{1 \leq i \leq N}$ as 
\[X_i = X(s_i)\] 
where $(s_i)_{1 \leq i \leq N}$ are such that the value
\[h_i := \int_{s_{i}}^{s_{i+1}} \frac{ds}{\omega(s)} \]
be (approximately) constant in $i$. 

Such a mesh turns out to be analogous to an algebraically graded mesh with a grading parameter $\beta = 2$. That is to say, near an edge of the curve, the width of the $i-th$ interval is approximately $(ih)^2$. Notice that this modification alone, i.e. using the h-BEM method with a polynomial order $p=1$, is not sufficient to get an optimal rate of convergence.
Indeed, it only leads to a convergence rate in $O(h)$ for the energy norm 
(cf. \cite[Theorem 1.3]{postell1990h}) instead of the expected $O(h^{5/2})$ behavior (to reach such an order of convergence would require $\beta = 5$).

The key ingredient to recover optimal convergence, beside the graded mesh, is to use a weighted $L^2$ scalar product 
(with weight  $\frac{1}{\omega}$ or $\omega$ depending on the considered equation), in order to assemble the operators in their natural spaces. 
We state here the orders of convergence that one gets with this new method, and refer the reader to \cite[Chap. 3, Sec. 2.3]{these} for the proofs. To keep the exposition simple, we also restrict our presentation to the case where $\Gamma = [-1,1]\times\{0\}$ and $k = 0$. 

\paragraph{Dirichlet problem.} For the resolution of the single-layer equation \eqref{Slambda} we use a variational formulation of \eqref{Somegaalpha} 
to compute an approximation $\alpha_h$ of $\alpha$. Namely, let $V_h$ the Galerkin space of (discontinuous) piecewise affine functions 
defined on the mesh $(x_i)_{0\leq i \leq N}$ defined above, and $\alpha_h$ the unique solution in $V_h$ to
\[ \inner{S_{0,\omega} \alpha_h}{\alpha_h'}_\frac{1}{\omega} = \inner{u_D}{\alpha_h'}_\frac{1}{\omega}, \quad \forall \alpha_h' \in V_h\,.\]
We then compute $\lambda_h = \frac{\alpha_h}{\omega}$. Using the notation $C$ to denote any constant that does not depend on the parameter $h$, we then have
\begin{theorem}[{\cite[Thm. 3.1]{these}}]
	If the data $u_D$ is in $T^{s+1}$ for some $-1/2 \leq s \leq 2$, then there holds:
	\[ \norm{\lambda - \lambda_h}_{\tilde{H}^{-1/2}} \leq C h^{s+1/2} \norm{\omega \lambda}_{T^{s}}\leq C h^{s+1/2} \norm{u_D}_{T^{s+1}}.\]
	\label{theOrdreCVDirichlet}
\end{theorem}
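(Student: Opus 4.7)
\medskip

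\noindent\textbf{Proof proposal.} The plan is a classical Céa-plus-approximation argument, carried out in the weighted framework $(T^s)_s$ which, via the pullback $\mathcal{C}:T^s\to H^s_e$ and $x=\cos\theta$, turns the graded mesh $(X_i)$ into a uniform mesh in $\theta$.

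First I would set up coercivity on the natural energy space $T^{-1/2}$. By \autoref{theorem1} and the eigenvalue computation $S_{0,\omega}T_n = \sigma_n T_n$ with $\sigma_n\sim 1/(2n)$, one has
\[
\inner{S_{0,\omega}\alpha}{\alpha}_{1/\omega} \;=\; \sigma_0|\hat\alpha_0|^2+\tfrac12\sum_{n\ge 1}\sigma_n|\hat\alpha_n|^2 \;\asymp\;\norm{\alpha}_{T^{-1/2}}^2,
\]
so the bilinear form $a(\alpha,\alpha') = \inner{S_{0,\omega}\alpha}{\alpha'}_{1/\omega}$ is continuous and coercive on $T^{-1/2}$. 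The Lax-Milgram / Céa lemma then yields the quasi-optimal estimate
\[
\norm{\alpha-\alpha_h}_{T^{-1/2}} \;\le\; C\inf_{\alpha_h'\in V_h}\norm{\alpha-\alpha_h'}_{T^{-1/2}}.
\]
By \eqref{lemJerez1}, multiplication by $\omega$ is an isomorphism $\tilde H^{-1/2}(\Gamma)\to T^{-1/2}$, so
$\norm{\lambda-\lambda_h}_{\tilde H^{-1/2}}\asymp\norm{\alpha-\alpha_h}_{T^{-1/2}}$, and the whole question reduces to proving the approximation estimate
\[
\inf_{\alpha_h'\in V_h}\norm{\alpha-\alpha_h'}_{T^{-1/2}} \;\le\; Ch^{s+1/2}\norm{\alpha}_{T^s},\qquad -\tfrac12\le s\le 2,
\]
for $\alpha=\omega\lambda$. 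The final inequality in the statement is the mapping property $S_{0,\omega}:T^s\to T^{s+1}$, a direct consequence of \autoref{STn} and the eigenvalue asymptotics $\sigma_n\asymp(1+n)^{-1}$.

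The heart of the proof, and the main obstacle, is the weighted approximation estimate. My approach is to pull back to the $\theta$-circle via $\mathcal{C}$. The choice of breakpoints $X_i=X(s_i)$ with $h_i=\int_{s_i}^{s_{i+1}}ds/\omega(s)$ constant is exactly such that the image of the mesh in $\theta$ is quasi-uniform with spacing $h\asymp h_i$. I would pick a concrete approximant $\alpha_h'\in V_h$ by taking the image, under $\mathcal{C}^{-1}$, of a standard quasi-interpolant (e.g. Scott-Zhang or the $L^2$-projection onto piecewise-affine-in-$\theta$ functions). Two things must then be checked: (i) that piecewise-affine-in-$x$ on the graded mesh and piecewise-affine-in-$\theta$ on the uniform mesh are equivalent approximation spaces up to a higher-order correction (because $\cos\theta$ is analytic, the nonlinear change of variable only perturbs the interpolant by $O(h^2)$ terms), and (ii) the standard periodic Jackson-type estimate
\[
\inf_{v_h\in W_h}\norm{v-v_h}_{H^{-1/2}(-\pi,\pi)} \;\le\; Ch^{s+1/2}\norm{v}_{H^s(-\pi,\pi)},\qquad -\tfrac12\le s\le 2,
\]
for $W_h$ the space of continuous piecewise affine, $2\pi$-periodic functions on the uniform mesh, applied to $v=\mathcal{C}\alpha\in H^s_e$. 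The range $s\le 2$ is the usual saturation order for piecewise linear finite elements in the $H^{-1/2}$ norm (one loses one full order to the polynomial degree $p=1$, gains $1/2$ from the duality with $H^{-1/2}$, and picks up a $1/2$ regularity of the target).

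The main technical difficulty is item (i): controlling the discrepancy between the pullback of a $V_h$-function and a true piecewise-affine function in $\theta$, uniformly in $s$ and in the interpolation scale. I would handle this via an Aubin-Nitsche duality on the $\theta$ side to treat the endpoints $s=-1/2$ (stability) and $s=2$ (maximal order) and interpolate in between using the fact, recalled just after \autoref{LemTinfCinf} and in \cite{averseng}, that $(T^s)_s$ is an interpolation scale. Once the approximation estimate is established, the theorem follows by concatenating the Céa bound, the norm equivalence on $\tilde H^{-1/2}$, and the mapping $S_{0,\omega}^{-1}:T^{s+1}\to T^s$.
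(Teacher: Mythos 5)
The paper does not actually prove this theorem; it is quoted from \cite[Thm.~3.1]{these} and the reader is referred to Chap.~3, Sec.~2.3 of that thesis. So your proposal can only be judged on its own merits. The skeleton you set up is the right one and is surely the backbone of any proof: coercivity and continuity of $\inner{S_{0,\omega}\cdot}{\cdot}_{1/\omega}$ on $T^{-1/2}$ follow from the explicit spectrum $\sigma_n\asymp(1+n)^{-1}$ of \autoref{STn}; C\'ea then reduces everything to a best-approximation estimate in $T^{-1/2}$; the identification $T^{-1/2}=\omega\tilde H^{-1/2}$ of \eqref{lemJerez1} converts that into the stated $\tilde H^{-1/2}$ bound; and the last inequality is the boundedness of $S_{0,\omega}^{-1}:T^{s+1}\to T^s$, again read off the eigenvalues. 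You also correctly identify the weighted approximation estimate as the sole nontrivial ingredient.

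The gap is in your item (i), and it sits exactly where the weighted structure of the problem lives. The assertion that ``because $\cos\theta$ is analytic, the nonlinear change of variable only perturbs the interpolant by $O(h^2)$ terms'' fails near $x=\pm1$: the pullback of a function affine in $x$ is, in the $\theta$ variable, of the form $a+b\cos\theta$, and at $\theta=0,\pi$ this local space degenerates to $\mathrm{span}\{1,\theta^2\}$ rather than $\mathrm{span}\{1,\theta\}$; equivalently, $\arccos x$ has a square-root singularity at the endpoints, so on the elements adjacent to $x=\pm1$ (of width $\asymp h^2$ in $x$ but $\asymp h$ in $\theta$) the discrepancy between piecewise-affine-in-$x$ and piecewise-affine-in-$\theta$ is $O(|b|h)$ pointwise, i.e.\ $O(|b|h^{3/2})$ in $L^2_{1/\omega}$, not $O(h^2)$. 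One loses a half order precisely at $s=2$ unless one exploits the \emph{evenness} of $v=\mathcal{C}\alpha$ (equivalently, the smoothness of $\alpha$ as a function of $x$ up to the endpoints), which forces the local slope $b$ on those elements to be $O(h^{1/2}\norm{v}_{H^2})$ and restores the rate; even on interior elements near the endpoints a naive count produces a spurious $\sqrt{\log(1/h)}$ that must be absorbed the same way. So the result is true and your route can be completed, but the one-line justification of (i) is wrong as stated, and the honest fix is either to carry the parity/weight information through the correction argument explicitly, or to bypass the detour through $\theta$-affine functions altogether and estimate the $L^2_{1/\omega}$-projection (or nodal interpolant) onto $V_h$ directly, element by element, with a weighted Bramble--Hilbert argument using the equivalence of $\norm{\cdot}_{T^2}$ with $\norm{\cdot}_{L^2_{1/\omega}}+\norm{(\omega\partial_x)^2\cdot}_{L^2_{1/\omega}}$, followed by the Aubin--Nitsche duality trick you mention to gain the extra $h^{1/2}$ in $T^{-1/2}$.
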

In particular, when $u_D$ is smooth, the solution $\alpha = \omega \lambda$ belongs to $T^{\infty}$, and we get the optimal rate of convergence of the error in  
$O(h^{5/2})$.

\paragraph{Neumann problem.} For the numerical resolution of \eqref{Nmu}, we use a variational form for equation \eqref{Nomegabeta} to compute an approximation $\beta_h$ of $\beta$, and solve 
it using a Galerkin method with continuous piecewise affine functions. Introducing $W_h$ the space of continuous piecewise affine functions on the mesh
defined by the points $(x_i)_{0\leq i\leq N}$, we denote by $\beta_h$ the unique solution in $W_h$ to the variational equation:
\begin{equation}
\inner{N_{0,\omega} \beta_h}{\beta_h'}_{\omega} = \inner{u_N}{\beta_h'}_{\omega}, \quad \forall \beta_h' \in W_h.
\label{NomegaBetaGalerk}
\end{equation}
Then, the proposed approximation for $\mu$, given by $\mu_h = \omega \beta_h$, satisfies the following error estimate. 
\begin{theorem}[{\cite[Thm. 3.2]{these}}]
	If $u_N \in U^{s-1}$, for some $\frac{1}{2} \leq s \leq 2$, there holds 
	\[\norm{\mu - \mu_h}_{\tilde{H}^{1/2}} \leq C h^{s - \frac{1}{2}}\norm{\frac{\mu}{\omega}}_{U^{s}}\leq C h^{s - \frac{1}{2}}\norm{u_N}_{U^{s-1}}.\]
	\label{theOrdreCVNeumann}
\end{theorem}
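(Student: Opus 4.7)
The plan is to follow the standard triad \emph{coercive variational formulation $\Rightarrow$ Céa's lemma $\Rightarrow$ interpolation estimate}, but carried out in the weighted scale $U^s$ rather than the usual Sobolev scale, and then to transport the estimate back to $\tilde H^{1/2}(\Gamma)$ for $\mu = \omega\beta$. First I would observe that \autoref{NUn} shows $N_{0,\omega}$ has eigenvalues $(n+1)/2$ on the orthogonal basis $(U_n)$ of $L^2_\omega$, hence by functional calculus the bilinear form $(\beta,\beta')\mapsto \inner{N_{0,\omega}\beta}{\beta'}_\omega$ is continuous and coercive on $U^{1/2}$ with coercivity constant controlled from below independently of $h$. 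In particular $N_{0,\omega}:U^s\to U^{s-1}$ is an isomorphism for every $s\in\R$, so the weighted equation \eqref{Nomegabeta} is well-posed at every level of regularity and $\norm{\beta}_{U^s}\le C\norm{u_N}_{U^{s-1}}$.

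With coercivity in hand, Céa's lemma applied to \eqref{NomegaBetaGalerk} yields the quasi-optimal bound
\[\norm{\beta-\beta_h}_{U^{1/2}}\le C\inf_{v\in W_h}\norm{\beta-v}_{U^{1/2}}\,.\]
What remains is the key approximation estimate: for $\beta\in U^s$ with $1/2\le s\le 2$,
\[\inf_{v\in W_h}\norm{\beta-v}_{U^{1/2}}\le C h^{s-1/2}\norm{\beta}_{U^{s}}\,.\]
This is the main obstacle and the step where the design of the graded mesh enters. My approach would be to pull back everything through the change of variables $x=\cos\theta$: the isomorphism $\mathcal{S}:U^s\to H^s_o$ defined in the paper identifies $U^s$ with odd periodic Sobolev functions, and the condition $h_i=\int_{s_i}^{s_{i+1}}ds/\omega(s)\approx\mathrm{const}$ is precisely the requirement that the transported nodes $\theta_i=\arccos(X_i)$ form a quasi-uniform partition of $[0,\pi]$. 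Continuous piecewise affine functions on $(X_i)$ multiplied by $\sin\theta$ then furnish an approximation space on the $\theta$-side that, for smooth enough targets, retains the standard piecewise-affine approximation power in the periodic $H^{1/2}$ scale. Building a Clément-type or Lagrange-type interpolation operator in these variables and invoking the Bramble-Hilbert lemma on each element of the quasi-uniform $\theta$-mesh (combined with the interpolation scale $U^s=[U^0,U^2]_{s/2}$) yields the rate $h^{s-1/2}$. One must check that the multiplicative factor $\sin\theta$ does not spoil these bounds; this is where the weight $\omega$ in the Galerkin scalar product is crucial, since it is exactly the Jacobian making $\mathcal{S}$ an isometry on the relevant $L^2$ spaces.

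The last step is cosmetic: by \eqref{lemJerez2} the map $\beta\mapsto\omega\beta$ is an isomorphism between $U^{1/2}$ and $\tilde H^{1/2}(\Gamma)$, so
\[\norm{\mu-\mu_h}_{\tilde H^{1/2}}=\norm{\omega(\beta-\beta_h)}_{\tilde H^{1/2}}\le C\norm{\beta-\beta_h}_{U^{1/2}}\,,\]
and combining this with the approximation estimate and the regularity bound $\norm{\beta}_{U^s}=\norm{\mu/\omega}_{U^s}\le C\norm{u_N}_{U^{s-1}}$ produces the double inequality in the statement. The most delicate point throughout is ensuring that the mesh grading really does compensate, in the $U^{1/2}$ norm, for the vanishing of $\omega$ at the endpoints; the rest is essentially a transcription of the classical Céa plus Bramble-Hilbert argument into the weighted scale $(U^s)_s$.
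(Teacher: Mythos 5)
First, a caveat: the paper does not actually prove this theorem in the text you were given --- it is quoted from \cite[Thm.~3.2]{these} and the reader is referred there for the proof --- so a line-by-line comparison is not possible. That said, your architecture (coercivity of $\inner{N_{0,\omega}\cdot}{\cdot}_\omega$ on $U^{1/2}$ via the eigenvalue formula of \autoref{NUn}, C\'ea's lemma for \eqref{NomegaBetaGalerk}, an approximation estimate on the graded mesh in the $U^{1/2}$ norm, and transport back to $\tilde H^{1/2}$ through the isomorphism $\beta\mapsto\omega\beta$ of \eqref{lemJerez2}) is certainly the intended one, and the well-posedness bound $\norm{\beta}_{U^s}\le C\norm{u_N}_{U^{s-1}}$ and the final ``cosmetic'' step are correct as you state them.

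The genuine gap is the approximation estimate $\inf_{v\in W_h}\norm{\beta-v}_{U^{1/2}}\le Ch^{s-1/2}\norm{\beta}_{U^s}$, which is the entire mathematical content of the theorem, and which you assert rather than prove. Your proposed route --- transport by $\mathcal{S}$ to a quasi-uniform $\theta$-mesh and invoke Bramble--Hilbert --- does not apply off the shelf: the transported trial space $\mathcal{S}(W_h)$ consists on each $\theta$-element of the span of $\{\sin\theta,\ \sin\theta\cos\theta\}$, which is \emph{not} a piecewise polynomial space and does not contain the local $P_1$ functions that the Bramble--Hilbert argument requires. Away from $\theta=0,\pi$ this two-dimensional Chebyshev system mimics affine functions well enough on small elements, but near the endpoints both basis functions vanish, and one must exploit the fact that the odd target $g=\mathcal{S}\beta$ also vanishes there to the matching order; this is exactly the point you flag (``one must check that the factor $\sin\theta$ does not spoil these bounds'') and then leave unresolved. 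A cleaner way to close the gap, consistent with the auxiliary estimates the paper states and verifies numerically, is to prove local interpolation estimates for $W_h$ directly in the $x$ variable in the norms $\norm{\cdot}_{L^2_\omega}$ (rate $h^s$) and $\norm{\cdot}_{U^1}$, i.e. $\norm{\partial_x(\omega\,\cdot)}_{L^2_\omega}$ (rate $h^{s-1}$), using the grading $\Delta_i\sim i h^2$ near the edges, and then obtain the $U^{1/2}$ rate $h^{s-1/2}$ by interpolation of the scale $(U^s)_s$. As written, your proof establishes everything except the one estimate that actually requires work.
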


\paragraph{Numerical validation.}
In fact, estimates in the local weighted $L^2$ norms can be derived from the previous, namely:
\[\forall s \in \left[0,2\right], \quad \norm{\alpha - \alpha_h}_{\frac{1}{\omega}} \leq C h^{s}\norm{\alpha}_{T^s}\,\] 
for the Dirichlet problem and 
\[\forall s \in \left[0,2\right], \quad \norm{\beta - \beta_h}_{{\omega}} \leq C h^{s}\norm{\beta}_{U^s}\,\] 
\[\forall s \in \left[1,2\right], \quad \norm{\beta - \beta_h}_{U^1} \leq C h^{s-1}\norm{\beta}_{U^s}\,\] 
for the Neumann problem. 
We verify those rates numerically. For the Dirichlet problem, we solve two test cases $S_{0,\omega} \alpha_1 = u_1$ and $S_{0,\omega} \alpha_2 = u_2$ having the explicit solutions $\alpha_1(x) = \omega(x)$ and $\alpha_2 = \omega(x)^3$, for adequately chosen right hand sides (rhs) $u_1$ and $u_2$. One can check that $\alpha_1 \in T^{s}$ for $s < \frac{3}{2}$ and $\alpha_1 \notin T^{3/2}$, while $\alpha_2 \in T^2$. The $L^2_\frac{1}{\omega}$ error is plotted in \autoref{fig:errL2dir} in each case as a function of the mesh size $h$. We find that the expected rates $O(h^{3/2})$ and $O(h^2)$ predicted by the theory are precisely observed in practice.  

Similarly, for the Neumann case, we solve a a test case $N_{0,\omega} \beta = u_N$ where the solution $\beta$ is explicit. We take $u_N = U_2$ the second Chebyshev polynomial of the second kind. The corresponding solution $\beta$ is proportional to $U_2$ and thus belongs to $U^\infty$. The theory therefore predicts a convergence rate of the error in the $L^2_\omega$ and $U^1$ norms respectively in $O(h^2)$ and $O(h)$. This behavior is again confirmed by our numerical results, exposed in \autoref{fig:errL2Neu}.
\begin{figure}[t]
	\centering
	\includegraphics[scale = 0.4]{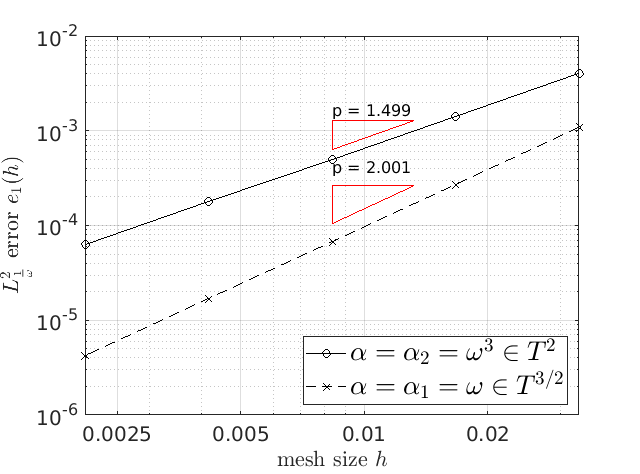}
	\caption{Effective order of convergence of the approximation of the solution $\beta$ to \eqref{Nomegabeta} by the weighted Galerkin method. Two cases are considered where $\alpha \in T^s$ for all $s < 3/2$ but $\alpha \notin T^{3/2}$ (solid line and circles) and $\alpha \in T^2$ (dashed line, crosses) respectively. The approximate slope $p$ is displayed above each curve. Theoretical convergence rates, respectively $O(h^{3/2})$ and $O(h^2)$ are recovered in practice.} 
	\label{fig:errL2dir}
\end{figure}
\begin{figure}[t]
	\centering
	\hspace{-1cm}\includegraphics[scale = 0.4]{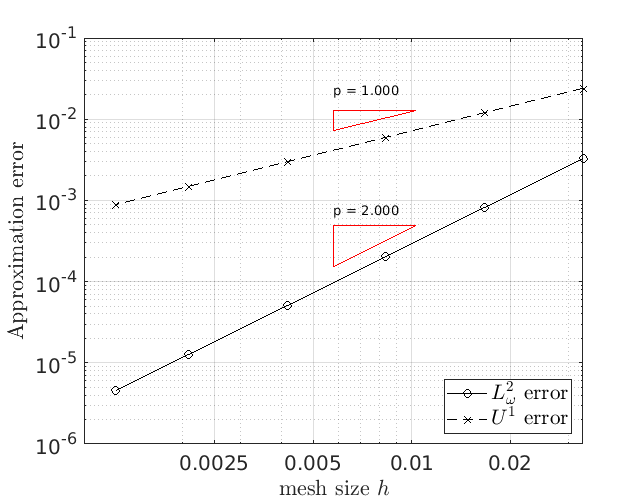}
	\caption{Effective order of convergence of the approximation of the solution $\beta$ to \eqref{Nomegabeta} by the weighted Galerkin method. In this test, the solution $\beta$ lies in $U^2$ and we measure the error in two different norms, respectively $L^2_\omega$ (solid line, circles) and $U^1$ (dashed line, crosses). The approximate slope $p$ is displayed above each curve. The theoretical order of convergence, in $O(h^2)$ and $O(h)$ respectively for the $L^2_\frac{1}{\omega}$ and $U^1$ norms, are recovered in practice. } 
	\label{fig:errL2Neu}
\end{figure}

\section{Building the preconditioners}
Let $X_h$ the considered finite element space ($X_h = V_h$ or $W_h$), and $(\phi_i)_{i}$ the basis functions. For an operator $A$, we denote by $\left[A\right]_p$ 
the Galerkin matrix of the operator for the relevant weight $p(x) = \frac{1}{\omega(x)}$ or $\omega(x)$, defined by
$$
[A]_{p,ij} = \int_\Gamma (A\phi_j)(x) \phi_i(x) p(x)\,dx\,
$$
where $\phi_i$ and $\phi_j$ are the basis functions of the Galerkin space. When the operator $BA$ is a compact perturbation of the identity (either in $T^s$ or $U^s$) then, following \cite{hiptmair2006operator,steinbach1998construction}, we 
precondition the linear system $\left[A\right]_p x = b$ by the matrix $\left[I_d\right]^{-1}_p \left[B\right]_p \left[I_d\right]_p^{-1}$, which amounts to solve
$$
\left[I_d\right]^{-1}_p \left[B\right]_p \left[I_d\right]_p^{-1} [A]_p x = \left[I_d\right]^{-1}_p \left[B\right]_p \left[I_d\right]_p^{-1} b\,.
$$
When $B$ is the inverse of a local operator $C$, then it may be more convenient to compute $\left[C\right]_p$, and solve instead
$$
\left[C\right]_p^{-1}[A]_p x = \left[C\right]_p^{-1}b\,.
$$

The operators 
\[P_k = \left[-(\omega \partial_x)^2 -k^2\omega^2\right]^{1/2} \sim S_{k,\omega}^{-1}, \quad Q_k = \left[-(\partial_x \omega)^2 -k^2\omega^2\right]^{-1/2} \sim N_{k,\omega}^{-1}\]
introduced in section 2 for $k = 0$ (\autoref{theorem1} and \autoref{theorem2}) and section $3$ for $k > 0$ are at the base of our preconditioning strategy, as $P_kS_{k,\omega}$ and $Q_kN_{k,\omega}$ are compact perturbations of the identity (\autoref{theorem4}). To define preconditioners for the linear systems, following the above remark, we need to compute the Galerkin matrices of those operators. 
For $Q_k$, we rewrite 
\[Q_k = \left[-(\partial_x \omega)^2 - k^2 \omega^2 \right]^{-1} \left[-(\partial_x \omega)^2 - k^2 \omega^2 \right]^{1/2}\,.\]
This brings us back to computing the Galerkin matrix of the square root of a differential operator. When the frequency is $0$, we use the method exposed in \cite{hale2008computing}, relying on the discretization of contour integrals in the complex plane. When the frequency is non-zero, the previous method fails since the spectrum of the matrix contains negative values. We then follow Antoine and Darbas \cite{antoine2007generalized} by using a Padé approximation of the square root with regularization and rotation of the branch cut. Let us reproduce here some details of the method, for the reader's convenience. Consider the classical Padé approximation
\[\forall z \in \mathbb{C}, \quad \sqrt{1+ z} \approx R_{N_p}(z) \isdef c_0 + \sum_{j = 0}^{N_p} \frac{a_j z}{1 + b_j z}\]
where the coefficients $c_0$, $a_i$ and $b_i$ are given by the formulas
\[c_0 = 1, \quad a_j = \frac{2}{2N_p + 1}\sin^2\left( \frac{j \pi}{2N_p + 1}\right)\, \quad b_j = \cos^2\left( \frac{j \pi}{2N_p + 1}\right)\]  
It is preferable to use a ``rotated" version of this approximation to avoid the singularity related to the branch cut for $X < -1$:
\[\sqrt{1+ z} = e^{i\frac{\theta}{2}} \sqrt{(1 + z)e^{-i\theta}} \approx e^{i\frac{\theta}{2}} R_{N_p}\left(1 + \left[(1 + z)e^{-i\theta} - 1\right]\right)\]
This yields the new approximation
\[\sqrt{1+ z} \approx C_0 + \sum_{i = 0}^{N_p} \frac{A_i z}{1 + B_iz}\]
where 
\[C_0 = e^{\frac{i \theta}{2}} R_{N_p}(e^{-i\theta} - 1)\,,\]
\[A_j = \frac{e^{-j \theta}a_j}{(1 + b_j(e^{-i\theta} - 1))^2}\,, \quad B_j = \frac{e^{-i\theta}b_j}{1 + b_j(e^{-i\theta} - 1)}\,.\]
This provides a good approximation of the square root in any region of the real line away from $X = -1$, as described by the next result.
\begin{Lem}
	\label{lemPrecisionsPade}
	Let $\theta \in \R$ and let $z$ a complex number. Let $r = \abs{z+ 1}$. One has 
	\[\abs{\sqrt{1+z} - C_0 - \sum_{j = 0}^{N_p} \frac{A_j z}{1 + B_j z}} \leq 2 \sqrt{r} \abs{\gamma(r,\theta)}^{2N_p + 1}\]
	where 
	\[\gamma(r,\theta) = \frac{\sqrt{r}e^{i\frac{\theta}{2}}-1}{\sqrt{r}e^{i\frac{\theta}{2}}+1}\]
\end{Lem}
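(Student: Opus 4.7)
The plan is to reduce the rotated Padé error to the classical (unrotated) Padé error for $\sqrt{1+\cdot}$ by undoing the rotation substitution that produced the coefficients $C_0, A_j, B_j$, and then to invoke an explicit closed form for the remainder of the diagonal Padé approximant.

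\textbf{Step 1: reduction to the standard Padé approximant.} Setting $w \isdef (1+z)e^{-i\theta} - 1$, so that $1+w = e^{-i\theta}(1+z)$, I would first verify by direct partial-fraction manipulation that
\[C_0 + \sum_{j=0}^{N_p}\frac{A_j z}{1+B_j z} \;=\; e^{i\theta/2}\,R_{N_p}(w),\]
which is nothing but the identity used to define the rotated coefficients $C_0$, $A_j$, $B_j$. With the principal branches, $\sqrt{1+z} = e^{i\theta/2}\sqrt{1+w}$, so the quantity to be bounded equals $\bigl|\sqrt{1+w} - R_{N_p}(w)\bigr|$ and the problem is reduced to controlling the standard Padé error.

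\textbf{Step 2: closed form for the remainder.} The analytical heart is the identity, valid for $w$ in the cut plane,
\[\sqrt{1+w} - R_{N_p}(w) \;=\; \frac{2\sqrt{1+w}\,\zeta^{2N_p+1}}{1+\zeta^{2N_p+1}}, \qquad \zeta \isdef \frac{\sqrt{1+w}-1}{\sqrt{1+w}+1}.\]
I would establish this via the Möbius change of variable $s = \sqrt{1+w} = (1+\zeta)/(1-\zeta)$, which sends the right half $s$-plane bijectively onto the unit disk in $\zeta$. Under this change, $R_{N_p}(w)$ becomes a rational function of $\zeta$ whose coefficients $a_j = \tfrac{2}{2N_p+1}\sin^2(j\pi/(2N_p+1))$, $b_j = \cos^2(j\pi/(2N_p+1))$ are precisely the ones arising from a trapezoidal quadrature on a specific contour integral for $\sqrt{1+w}$. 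A direct algebraic check (or recognition as the classical Zolotarev/Wachspress identity for the diagonal Padé approximant of the square root) then yields $(s - R_{N_p})/(s + R_{N_p}) = \zeta^{2N_p+1}$, from which the displayed formula follows by solving for $s - R_{N_p}$.

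\textbf{Step 3: evaluation and bound.} Since $|1+w| = r$, one has $|\sqrt{1+w}| = \sqrt{r}$. In the situation actually used in applications, $1+z$ is real positive, so $\sqrt{1+w} = \sqrt{r}\,e^{-i\theta/2}$ and a direct computation shows $\zeta = \overline{\gamma(r,\theta)}$, hence $|\zeta| = |\gamma(r,\theta)|$; for complex $z$ with $1+z = r e^{i\phi}$ the same calculation replaces $\theta$ by $\theta-\phi$. Plugging into the closed form,
\[\bigl|\sqrt{1+w} - R_{N_p}(w)\bigr| \;\le\; \frac{2\sqrt{r}\,|\gamma(r,\theta)|^{2N_p+1}}{\bigl|1+\zeta^{2N_p+1}\bigr|},\]
and the stated inequality follows in the relevant regime $|\gamma(r,\theta)|<1$ (outside which the bound is vacuous) after controlling the denominator, e.g.\ by $|1+\zeta^{2N_p+1}|\ge 1-|\gamma(r,\theta)|^{2N_p+1}$, the extra factor being absorbed in the constant.

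\textbf{Main obstacle.} The crux is Step 2: producing the closed-form identity $(s-R_{N_p})/(s+R_{N_p}) = \zeta^{2N_p+1}$. Either one cites it as the known error expression for the $(N_p,N_p)$ diagonal Padé approximant of $\sqrt{1+w}$, or one must derive it by showing that both sides are rational in $s$ of the same type, share zeros and poles at the same locations, and match to order $2N_p+1$ at $w=0$. The rotation of Step 1 and the modulus computation of Step 3 are routine once this is in hand.
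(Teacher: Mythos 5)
The paper states this lemma without proof (it is quoted as a known fact from the Pad\'e-approximant literature used by Antoine--Darbas), so there is nothing to compare against line by line; your argument has to stand on its own. It does: the route you take --- undo the rotation via $w=(1+z)e^{-i\theta}-1$, invoke the closed-form remainder $\sqrt{1+w}-R_{N_p}(w)=\dfrac{2\sqrt{1+w}\,\zeta^{2N_p+1}}{1+\zeta^{2N_p+1}}$ with $\zeta=\dfrac{\sqrt{1+w}-1}{\sqrt{1+w}+1}$, then take moduli --- is the standard derivation, and you correctly isolate the crux, namely the identity $(s-R_{N_p})/(s+R_{N_p})=\zeta^{2N_p+1}$ with $s=\sqrt{1+w}$, which follows from $R_{N_p}(w)=\dfrac{(1+s)^{2N_p+1}-(1-s)^{2N_p+1}}{(1+s)^{2N_p+1}+(1-s)^{2N_p+1}}$ and $\dfrac{1-s}{1+s}=-\zeta$.

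Two caveats, both of which are really imprecisions in the lemma as stated rather than defects of your proof. First, your bound carries the denominator $\abs{1+\zeta^{2N_p+1}}$, which satisfies $\abs{1+\zeta^{2N_p+1}}\ge 1-\abs{\gamma}^{2N_p+1}$ but is not $\ge 1$ in general (e.g.\ when $\zeta^{2N_p+1}$ has negative real part), so the literal constant-free inequality of the lemma is not obtained without an extra hypothesis or a harmless multiplicative factor; since the lemma is only used to conclude exponential convergence on $\delta<\abs{z+1}<R$, this is immaterial, but it should be flagged. Second, as you observe in Step 3, for genuinely complex $z$ the error depends on $\arg(1+z)$ and not merely on $r=\abs{1+z}$, so the stated bound with $\gamma(r,\theta)$ is exact only when $1+z$ is real positive (the case relevant to the self-adjoint operators $X$ in the paper); otherwise $\theta$ must be replaced by $\theta-\arg(1+z)$. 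With those qualifications made explicit, the proof is complete and is the expected one.
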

As a consequence, it is not difficult to check that, if $\theta \in (-\pi,\pi)$, then when $N_p \to \infty$, the Padé approximants converge exponentially in the uniform error in any region of the form $\delta < |z + 1| < R$ where $0 < \delta < 1$ and $R > 1$. The previous scheme can be exploited to approximate an operator $\sqrt{X - k^2 I_d}$ where $X$ is a positive self-adjoint operator. Writing
\[\sqrt{X - k^2I_d} = ik \sqrt{I_d + \frac{-X}{k^2}}\]
we have
\[\sqrt{X - k^2I_d} \approx ik \left(C_0 + \sum_{j = 0}^{N_p}A_jX(B_jX - k^2)^{-1}\right)\]
Using \autoref{lemPrecisionsPade}, one can conclude that, when $N_p$ is large enough, this yields a good approximation in the eigenspaces that are associated to eigenvalues $\lambda$ such that
\begin{equation}
\label{equationsSurR}
\frac{\lambda}{k^2} \in [-R,R] \setminus [1 - \delta, 1 + \delta]\,.
\end{equation}
In our context, the eigenvalues $\lambda \approx k^2$ correspond to the so-called ``grazing modes". To deal with them, Marion Darbas introduced in her thesis \cite{darbas2004preconditionneurs} a regularization recipe, which consists in adding to the wavenumber some damping. Namely, letting $\varepsilon > 0$, the approximation is replaced by 
\[\sqrt{X - k^2 I_d} \approx ik \left(C_0 I_d + \sum_{j = 0}^{N_p} A_j X \left(B_j X - (k + i\varepsilon)^2I_d\right)^{-1}\right)\]
Based on those considerations, we can approximate the Galerkin matrix of the operator appearing on the left by 
\begin{equation}
\label{DefPrecPade}
\left[\sqrt{X - k^2 I_d}\right]_p \approx ik \left(C_0 [I_d]_p + \sum_{j = 0}^{N_p} A_j [X]_p \left(B_j [X]_p - (k + i\varepsilon)^2[I_d]_p\right)^{-1}\right)\,.
\end{equation}
When $X$ is a local operator, this formula involves sparse matrix products and sparse linear system resolutions, which can be performed efficiently. To get the Galerkin matrices of $P_k$ and $Q_k$, we apply this strategy with $X = -(\omega \partial_x)^2 + k^2(I_d -\omega^2)$ in the case of the Dirichlet problem and $X = -(\partial_x \omega)^2 + k^2(I_d - \omega^2)$ for the Neumann problem. We use the parameters $N_p = 15$, $\theta = \frac{\pi}{3}$ and $\varepsilon = 0.05 k^{1/3}$. Our numerical results do not depend crucially on those choices, see \autoref{secPade}. The choice of $N_p$ is dictated by the ratio $\frac{N_{dof}}{k}$ where $N_{dof}$ is the dimension of the Galerkin space. In our tests, this ratio will be held fixed unless stated otherwise, and thus, a fixed value of $N_p$ yields good results for all $k$. An informal way to explain this fact is that for our choices of operator $X$, the largest eigenvalues of the Galerkin matrix behave as $\lambda_M \leq C_1 N_{dof}^2$. Using a fixed number of points per wavelength gives in turn $N_{dof} = C_2 k$ and thus 
\[\lambda_M \leq C_1 C_2 k^2\,.\]
Choosing $N_p$ such that \eqref{equationsSurR} holds with $R = C_1C_2$, and forgetting about the grazing modes, we thus ensure a small error in \eqref{DefPrecPade} independently of $k$. On the other hand, when $k$ is fixed and the mesh is refined, it is necessary to increase $N_p$ to maintain accuracy. 

\section{Numerical results}
\label{sec:NumericalResutls}

In this section, we present some numerical results concerning the efficiency of the preconditioners defined above for solving the linear systems arising from the Galerkin method detailed in section 4. The linear systems are solved with the GMRES method \cite{saad1986gmres} with no restart and a tolerance of $\varepsilon = 10^{-8}$. Execution times are reported only to show in which case the preconditioned linear system is solved faster than the non-preconditioned system. We report the best timing of three successive runs when the time is less than $5$ seconds. If the number of iterations is greater than 500, we stop the calculations and report the time to reach the 500th iteration. The computations are also interrupted when they last more than $15$ minutes and in this case no iteration number is reported. All the simulations are performed on a personal laptop running on an eight cores intel i7 processor with a clock rate of 2.8GHz. The method is implemented in the language Matlab R2018a. In this section, $k$ stands for the wave number, $N$ for the number of mesh points and $\abs{\Gamma}$ for the length of a curve $\Gamma$. Moreover, $\omega_\Gamma$ is the weight defined on any curve $\Gamma$ by 
\[\omega_\Gamma(r(t)) = \frac{\abs{\Gamma}}{2}\sqrt{1 - t^2}\]
where $r : [-1,1] \to \Gamma$ is a parametrization of $\Gamma$ satisfying
\[\forall t \in [-1,1], \quad  \norm{\partial_t r(t)} = \frac{\abs{\Gamma}}{2}\,.\]
For problems such that $N > 5 \times 10^3$, we use the Efficient Bessel Decomposition \cite{averseng2017} to compress the Galerkin matrices. 

\subsection{Laplace equation on the flat segment}

We start by testing the performance of the exact inverses, characterized in section 2, as preconditioners. This is rather a validation stage. 

\paragraph{Flat segment, Laplace-Dirichlet problem.}   In \autoref{TableNitTimeLaplaceDirichlet}, we report the timings and number of GMRES iterations for the Laplace weighted single-layer equation 
\[S_{0,\omega} \alpha = u_D\,.\]
Two cases are considered, first without any preconditioner, and then with a preconditioner given by the exact inverse $2\sqrt{-(\omega \partial_x)^2} + \frac{2}{\ln(2)} \pi_0$ (see \autoref{theorem1} and the previous section for the detailed construction of the preconditioner). The rhs is chosen as 
\[\forall x \in [-1,1]\,, \quad u_D(x) = \left(x^2 + \frac{1}{N^2}\right)^{-\frac{1}{2}}\,.\]
A graph of the history of the GMRES relative residual is given in \autoref{FigureNitLaplaceDirichlet} for a mesh with $N = 2000$ node points. 

\begin{table}[t]
	\begin{center}
		\begin{tabular}{m{4em}|m{4em}|m{4em}|m{4em}| m{4em}} 
			\hline
			\multicolumn{1}{c|}{ }&
			\multicolumn{2}{c|}{with Prec.}&\multicolumn{2}{c}{without Prec.}\\
			\hline
			$N$ & $n_{it}$& t(s) & $n_{it}$ & t(s)\\
			\hline\hline
			500 & 8 & $< 0.1$ & 79 & $< 0.1$ \\
			\hline
			2000 & 8 & $<0.1$ & 128 & 0.3 \\
			\hline
			8000 & 7 & $0.5$ & 218 & 11.5 \\
			\hline
			32000 & 8 & $2.2$& 347 & 89 \\
			\hline
		\end{tabular}	
		\caption{Computing time and number of GMRES iterations for the numerical resolution of the weighted Laplace single-layer integral equation on the segment respectively with the square root preconditioner and without preconditioner.}
		\label{TableNitTimeLaplaceDirichlet}
	\end{center}
\end{table}

\paragraph{Flat segment, Laplace-Neumann problem.} 
For the Laplace weighted hypersingular equation 
\[N_{0,\omega}\beta = u_N\,,\]
we also report in \autoref{TableNitTimeLaplaceNeumann} the timings and number of iterations of the GMRES method first without preconditioner, and then with the preconditioner obtained from the operator $[-(\partial_x\omega)^2]^{-1/2}$ by the method described in the previous section. The rhs is chosen as 
\[\forall x \in [-1,1], \quad  u_N(x) = \left(x^2 + \frac{1}{N^2}\right)^{1/2}\,.\]
A graph of the history of the GMRES relative residual is given in \autoref{FigureNitLaplaceDirichlet} for a mesh mesh with $N = 2000$ node points. 

\begin{table}[H]
	\begin{center}
		\begin{tabular}{m{4em} | m{4em} | m{4em} | m{4em} | m{4em}} 
			\hline
			\multicolumn{1}{c|}{ }&
			\multicolumn{2}{c|}{with Prec.}&\multicolumn{2}{c}{without Prec.}\\
			\hline
			$N$ & $n_{it}$& t(s) & $n_{it}$ & t(s)\\
			\hline\hline
			500 & 5 & $< 0.1$ & 333 & 0.3\\
			\hline
			2000 & 5 & $<0.1$ & $>500$ & 2\\
			\hline
			8000 & 7 & 1.1 & $>500$ & 60 \\
			\hline
			32000 & 6 & 4 & $>500$ & 725\\
			\hline
		\end{tabular}
	\end{center}
	\caption{Computing time and number of GMRES iterations for the numerical resolution of the weighted Laplace hypersingular integral equation on the segment respectively with the square root preconditioner and without preconditioner.}
	\label{TableNitTimeLaplaceNeumann}
\end{table}
We observe in both Dirichlet and Neumann cases a low and stable number of iteration which is the expected behavior. In the Neumann problem, the presence of the preconditioner leads to huge speedups.

\begin{figure}[H]
	\centering
	\begin{subfigure}[]{0.45\linewidth}
		\centering
		\includegraphics[width=\linewidth]{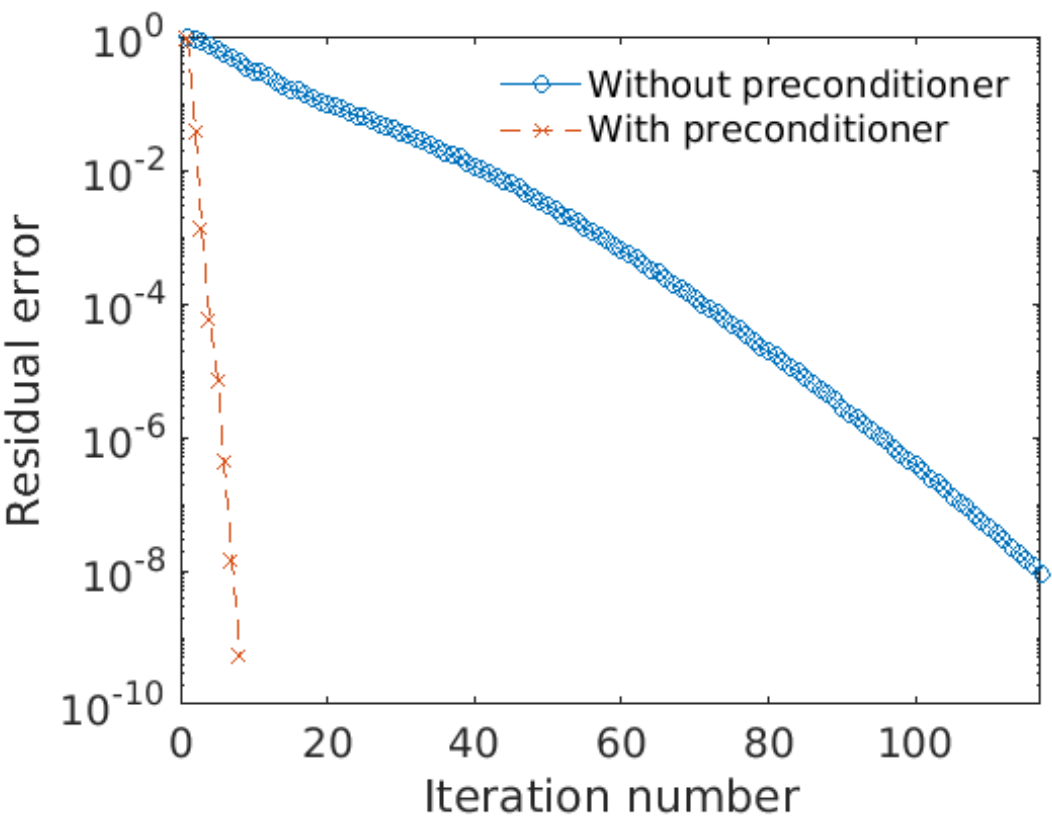}
		\caption{Laplace weighted single-layer}
	\end{subfigure}
	\begin{subfigure}[]{0.45\linewidth}
		\centering
		\includegraphics[width=\linewidth]{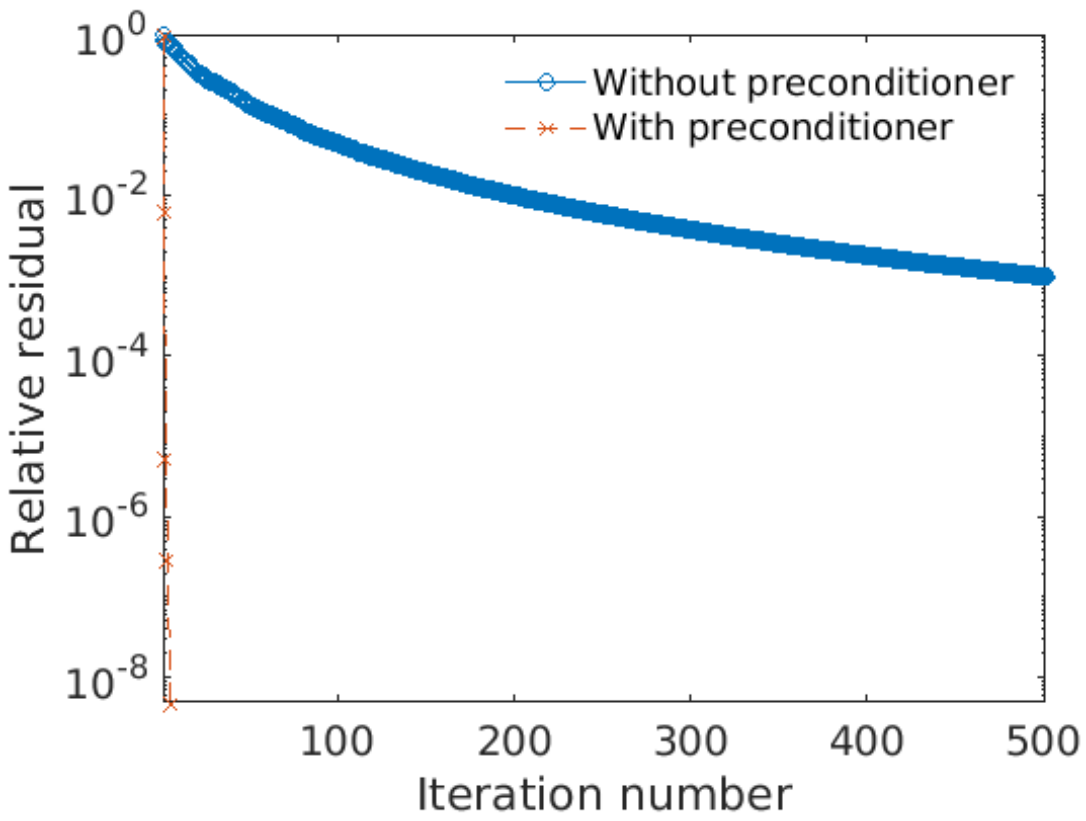}
		\caption{Laplace weighted hypersingular}
	\end{subfigure}
	\caption{Comparison of the hisotry of the GMRES relative residuals for the resolution of the Laplace ($k = 0$) weighted singley-layer (left) and hypersingular (right) integral equations on the flat segment with a mesh of size $N = 2000$, respectively without preconditioner (blue circles) and with the square root preconditioner (red crosses). In this simple case, the preconditioners are based on the exact inverses of the weighted layer potentials}
	\label{FigureNitLaplaceDirichlet}
\end{figure}

\subsection{Helmholtz equation on the flat segment}

We now turn our attention to the Helmholtz equation ($k> 0$). From now on, unless stated otherwise, the number of segments in the discretization is set to $N \approx 5k \abs{\Gamma}$ (5 points per wavelength). 

\paragraph{Flat segment, Helmholtz-Dirichlet problem.} 
In \autoref{TableNitTimeHemholtzDirichlet} we report the number of GMRES iterations for the numerical resolution of the weighted single-layer integral equation 
\[S_{k,\omega}\alpha = u_D\]
on the flat segment $\Gamma = [-1,1]\times \{0\}$, when the using a preconditioner based on the opretor 
$$\sqrt{-(\omega \partial_x)^2 - k^2 \omega^2},$$ 
as compared to the case where no preconditioner is used.  We take, for the Dirichlet data, the plane wave $u_D(x) = e^{ikx}$. We also provide, in 
\autoref{DirichletHelmholtzSeg}, the history of the GMRES relative residual with and without preconditioner, for a problem with $k\abs{\Gamma}=200\pi$. When the preconditioner is used, the number of iterations is approximately reduced by a factor $10$. 
\begin{table}
	\begin{center}
		\begin{tabular}{m{4em} | m{4em} | m{4em} | m{4em} | m{4em}} 
			\hline
			\multicolumn{1}{c|}{ }&
			\multicolumn{2}{c|}{with Prec.}&\multicolumn{2}{c}{without Prec.}\\
			\hline
			${k\abs{\Gamma}}$ & $n_{it}$& t(s) & $n_{it}$ & t(s)\\
			\hline\hline
			50$\pi$ & 8 & $<0.1$ & 88 & 0.2\\
			\hline
			200$\pi$  & 10 & 0.2 & 123 & 1.4\\
			\hline
			400$\pi$  & 13 & 4 & 145 & 40\\
			\hline
			800$\pi$  & 16 & 15 & 155 & 110 \\
			\hline
			1600$\pi$ & 20 & 70 & 199 & 642\\
			\hline
		\end{tabular}
	\end{center}
	\caption{Computing time and number of GMRES iterations for the numerical resolution of the Helmholtz weighted single-layer integral equation on the segment respectively with the square root preconditioner and without preconditioner.}
	\label{TableNitTimeHemholtzDirichlet}
\end{table}
\paragraph{Flat segment, Helmholtz-Neumann problem.} 
We run the same numerical comparisons, this time for the Helmholtz weighted hypersingular integral equation 
\[N_{k,\omega}\mu = u_N\]
on $\Gamma = [-1,1] \times \{0\}$ and taking the preconditioner based on the operator 
\[\left[-(\partial_x\omega)^2 -k^2 \omega^2\right]^{-1/2}\,.\]
Results are given in \autoref{TableNitTimeHemholtzNeumann} for different meshes and in
\autoref{NeumannHelmholtzSeg} for the history of the GMRES relative residual in a case where $k \abs{\Gamma} = 200\pi$. The rhs is chosen as the normal derivative of a diagonal plane wave 
\[u_{inc}(x,y) = e^{i\frac{k \sqrt{2}}{2}(x+y)}\,.\] 
Huge differences, both in time and number of iterations are
shown in favor of the preconditioned system.
\begin{table}[H]
	\begin{center}
		\begin{tabular}{m{4em} | m{4em} | m{4em} | m{4em} | m{4em}} 
			\hline
			\multicolumn{1}{c|}{ }&
			\multicolumn{2}{c|}{with Prec.}&\multicolumn{2}{c}{without Prec.}\\
			\hline
			${k\abs{\Gamma}}$ & $n_{it}$& t(s) & $n_{it}$ & t(s)\\
			\hline\hline
			50$\pi$ & 10 & $<0.1$ & $>500$ & 3.1\\
			\hline
			200$\pi$  & 13 & 0.3 & $>500$ & 12 \\
			\hline
			400$\pi$  & 14 & 7 & $>500$ & 260\\
			\hline
			800$\pi$  & 18 & 34 & - & $>15$min \\
			\hline
			1600$\pi$ & 25 & 270 & - & $>15$min\\
			\hline
		\end{tabular}
	\end{center}
	\caption{Computing time and number of GMRES iterations for the numerical resolution of the Helmholtz weighted hypersingular integral equation on the segment respectively with the square root preconditioner and without preconditioner.}
	\label{TableNitTimeHemholtzNeumann}
\end{table}

\begin{figure}[t]
	\centering
	\begin{subfigure}[]{0.5\linewidth}
		\centering
		\includegraphics[width=\linewidth]{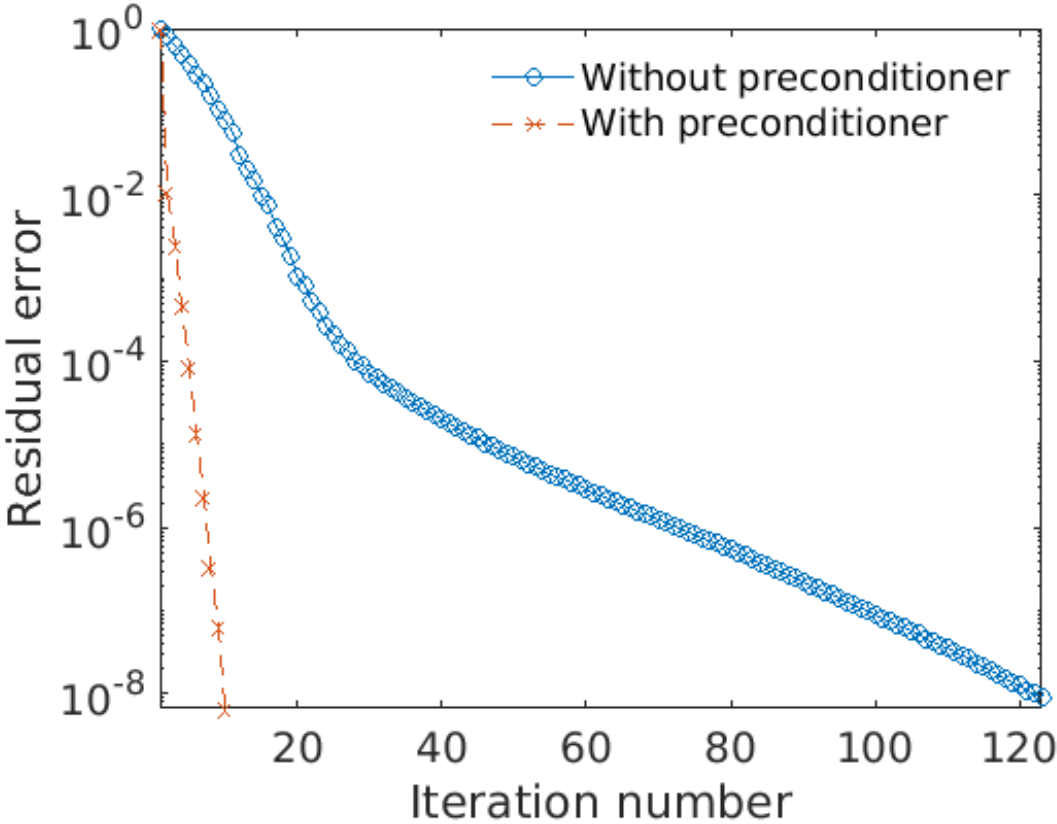}
		\caption{Helmholtz weighted single-layer}
		\label{DirichletHelmholtzSeg}
	\end{subfigure}%
	\begin{subfigure}[]{0.5\linewidth}
		\centering
		\includegraphics[width=\linewidth]{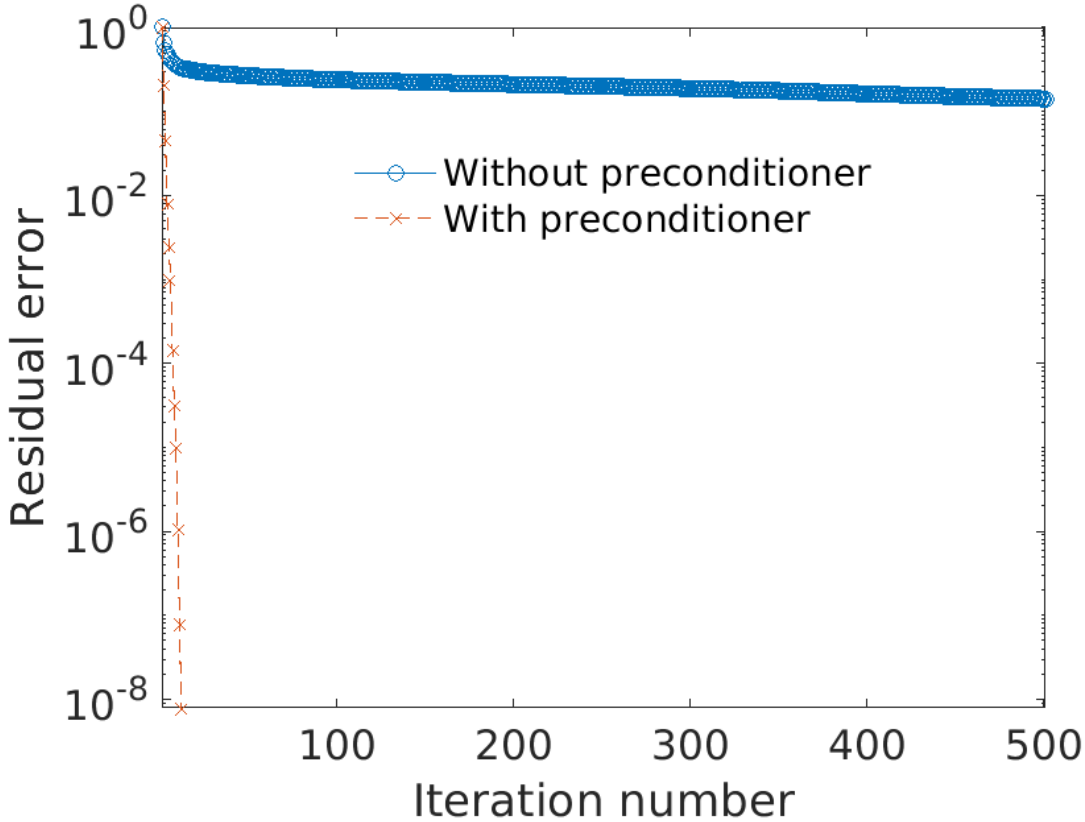}
		\caption{Helmholtz weighted hypersingular}
		\label{NeumannHelmholtzSeg}
	\end{subfigure}
	\caption{Comparison history of the GMRES relative residuals for the resolution of the Helmholtz ($k > 0$) weighted singley-layer (left) and hypersingular (right) integral equations on the flat segment for $k\abs{\Gamma} = 200\pi$, $N \approx 3500$, respectively without preconditioner (blue circles) and with the square root preconditioner (red crosses)}
	\label{FigureNitHelmDirichlet}
\end{figure}

\subsection{Helmholtz equation on non-flat arcs} 

Recall that for any non-flat arc $\Gamma$, the Helmholtz weighted single- and hypersingular layer potentials are defined by 
\[S_{k,\omega_\Gamma}\phi \isdef S_{k} \left(\frac{\phi}{\omega_\Gamma}\right)\,,\quad N_{k,\omega_\Gamma}\phi \isdef N_k (\omega_\Gamma \phi)\,.\]

\paragraph{Spiral-shaped arc} We first consider a spiral-shaped arc of equation 
\[x(t) = e^{0.4(t-0.2)} \cos(2(t - 0.2))\,,\]
\[y(t) = e^{0.4(t-0.2)} \sin(2(t - 0.2))\,,\]
for $t \in [-1,1]$. The curve has a length of about $\abs{\Gamma} = 3.88$. We report in Tables \ref{SpiralDir} and \ref{SpiralNeu} the number of iterations and computing times respectively for the Dirichlet and Neumann problems with the rhs given by
\[u_D = u_{inc|\Gamma}\, \quad u_N = \frac{\partial u_{inc}}{\partial n}\]
where for all $(x,y)$, $u_{inc}(x,y) = e^{ikx}$.  To illustrate the problem, the scattering pattern with Neumann boundary conditions is shown in \autoref{ArcOfSpiral} for this geometry. The preconditioning performances are qualitatively similar to the case of the flat segment. This shows that the preconditioning strategy is also efficient in presence of non-zero curvature.
\begin{figure}
	\centering
	\includegraphics[width=0.6\linewidth]{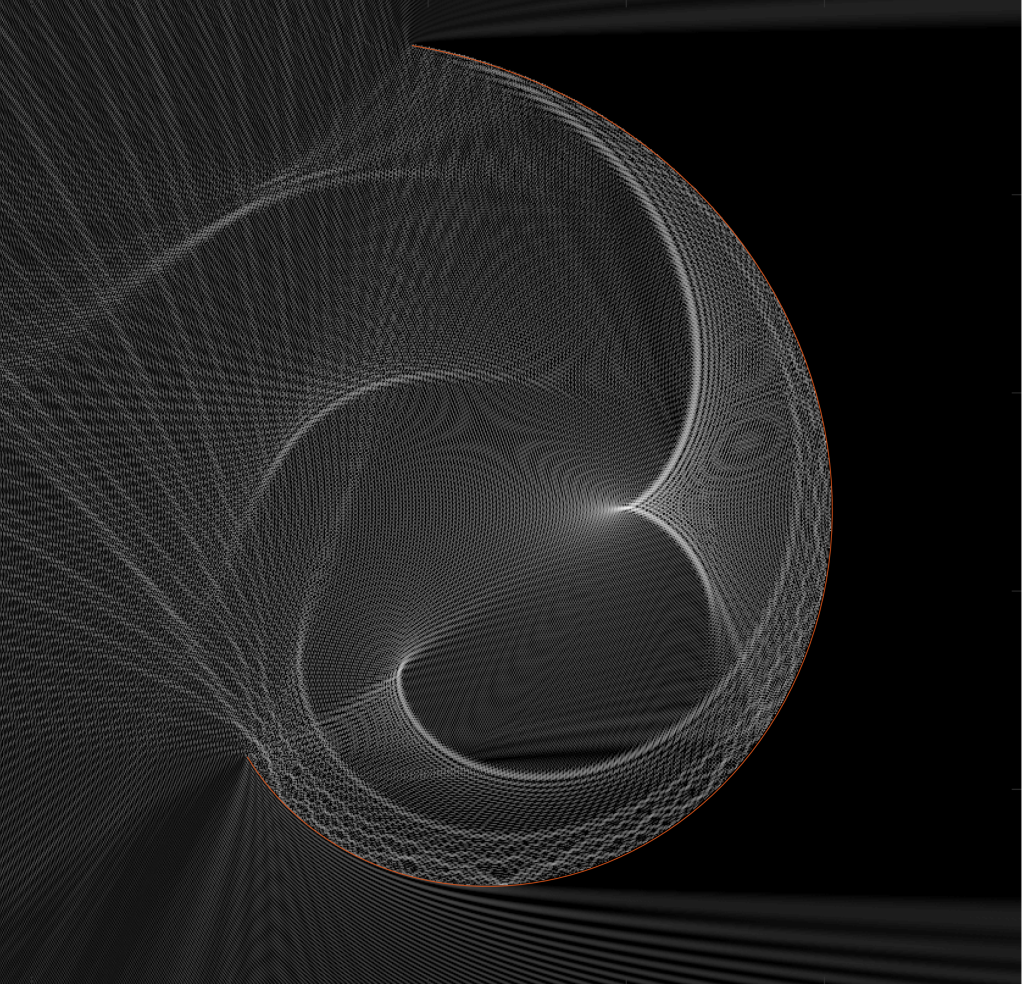}
	\caption{Scattering pattern of a plane wave with left to right incidence
		on the spiral-shaped sound-hard screen (Neumann boundary conditions), with $k\abs{\Gamma} = 800\pi$. After the resolution of the integral equation, the computation of the image is accelerated by
		the EBD method \cite{averseng2017}.}
	\label{ArcOfSpiral}
\end{figure}
\vspace{-0.5cm}
\begin{table}
	\begin{center}
		\begin{tabular}{m{4em} | m{4em} | m{4em} | m{4em} | m{4em}} 
			\hline
			\multicolumn{1}{c|}{ }&
			\multicolumn{2}{c|}{With prec.}&\multicolumn{2}{c}{Without prec.}\\
			\hline
			$k\abs{\Gamma}$ & $n_{it}$& t(s) & $n_{it}$ & t(s)\\
			\hline\hline
			$50\pi$ & 19 & 0.1 & 93 & 0.2\\
			\hline
			$200\pi$ & 24 & 0.55 & 136 & 1.77\\
			\hline
			$400\pi$ & 27 & 4.7 & 160 & 30\\
			\hline
			$800\pi$ & 30 & 16.5 & 190 & 92\\
			\hline
			$1600\pi$ & 32 & 71 & 217 & 456\\
			\hline
		\end{tabular}
	\end{center}
	\caption{Computing time and number of GMRES iterations for the numerical resolution of the Helmholtz weighted single-layer integral equation on the spiral-shaped arc respectively with the square-root preconditioner and without preconditioner}
	\label{SpiralDir}
\end{table}
\begin{table}
	\begin{center}
		\begin{tabular}{m{4em} | m{4em} | m{4em} | m{4em} | m{4em}} 
			\hline
			\multicolumn{1}{c|}{ }&
			\multicolumn{2}{c|}{With prec.}&\multicolumn{2}{c}{Without prec.}\\
			\hline
			$k\abs{\Gamma}$ & $n_{it}$& t(s) & $n_{it}$ & t(s)\\
			\hline\hline
			$50\pi$ & 22 & 0.15 & $>500$ & 3\\
			\hline
			$200\pi$ & 31 & 0.7 & $>500$ & 15.4\\
			\hline
			$400\pi$ & 34 & 13 & $>500$ & 209\\
			\hline
			$800\pi$ & 35 & 46 & - & $>15$min\\
			\hline
			$1600\pi$ & 42 & 283 & - &$>15$min\\
			\hline
		\end{tabular}
	\end{center}
	\caption{Computing time and number of GMRES iterations for the numerical resolution of the Helmholtz weighted hypersingular integral equation on the spiral-shaped arc respectively with the square-root preconditioner and without preconditioner}
	\label{SpiralNeu}
\end{table}

\paragraph{Non-smooth arc} We consider now the case where $\Gamma$ is the V-shaped arc given by the parametric equations
\[x(t) = t\sin\frac{\theta}{2}, \quad y(t) = |t|\cos\frac{\theta}{2}, \quad t \in [-1,1]\]
where $\theta$ is a parameter. When $\theta = \pi$, $\Gamma$ is the flat segment. When $\theta \neq \pi$, this arc has a corner in the middle of angle $\theta$. Since it is not a smooth arc, the theory presented in this work does not apply. For example, the solution $\alpha$ to the weighted single-layer integral equation 
\[S_{k,\omega_\Gamma} \alpha = u_D\]
where $u_D$ is a smooth function, has a singularity at the corner of $\Gamma$. This is illustrated in \autoref{singularityCorner} where we plot the solution $\alpha(s)$ as a function of the arclength $s$ for a rhs $u_D = e^{ikx}$, when $\theta = \frac{\pi}{2}$. Despit this singularity, the number of GMRES iterations remains independent of the mesh size for a fixed frequency. This result is reported in \autoref{TableNitVshapedkGammaFixed}, where we compare, for $k\abs{\Gamma} = 50$ fixed, the number of GMRES iterations for the resolution of the Helmholtz weighted single-layer integral equation respectively on the flat segment and on the V-shaped curve, for different values of $N$. 

We show in \autoref{tableNitAngleHelmholtzDirichlet} the influence of the frequency on the preconditioning performances for the Helmholtz weighted single-layer integral equation on the V-shaped arc of angle $\theta = \frac{\pi}{2}$. The results are qualitatively the same as in the case of a smooth curve. To illustrate the problem, the scattering pattern for a sound-hard V-shaped arc of angle $\frac\pi2$ (Neumann conditions) is shown in \autoref{angle} for $k\abs{\Gamma} = 50$.

\begin{figure}
	\centering
	\includegraphics[width=0.6\textwidth]{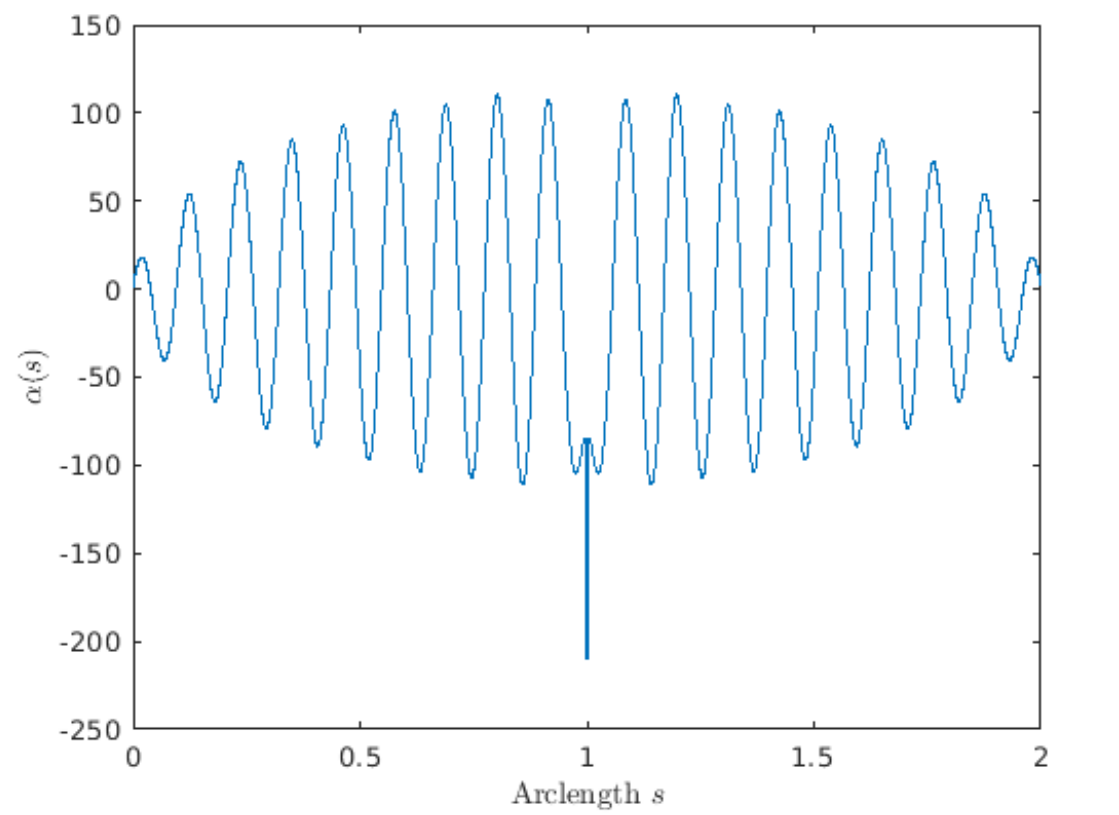}
	\caption{Plot of the solution of the Helmholtz weighted single-layer integral equation $S_{k,\omega_\Gamma} = u_D$ where $\Gamma$ is the V-shaped arc with $\theta = \frac{\pi}{2}$ and $u_D = u_{inc|\Gamma}$ where $u_{inc}(x,y) = e^{iky}$. Notice the singularity at $s = 1$}
	\label{singularityCorner}
\end{figure}
\begin{table}
	\begin{center}
		\begin{tabular}{m{5em} | m{4em} | m{4em} | m{4em} | m{4em}} 
			\hline 
			$N/(k\abs{\Gamma})$ & flat seg. & $\theta = \frac{3\pi}{4}$ & $\theta = \frac{\pi}{2}$ & $\theta = \frac{\pi}{6}$ \\\hline \hline
			2.5 &8 &9 &10 &17\\\hline
			5&7&8&9&17\\\hline
			7.5&7&8&10&17\\\hline
			10&7&8&10&17\\\hline
			12.5&7&8&9&17\\\hline
			15&7&8&10&17\\\hline
		\end{tabular}
	\end{center}
	\caption{Number of GMRES iterations for the numerical resolution of the Helmholtz weighted single-layer integral equation with the square-root preconditioner, from left to right, one the flat segment, and on V-shaped arcs with increasing singularities ($\theta = \frac{3\pi}{4},\frac{\pi}{2},\frac{\pi}{6}$). In all cases, the parameeter $k\abs{\Gamma}$ is fixed to $50$ and the mesh is progressively refined. In this test, we use $N_p = 60$ Padé approximants and $u_{D} = u_{inc|\Gamma}$ where $u_{inc}(x,y) = e^{iky}$.}
	\label{TableNitVshapedkGammaFixed}
\end{table}
\begin{figure}
	\centering
	\begin{subfigure}[b]{0.45\linewidth}
		\centering
		\includegraphics[width=\linewidth]{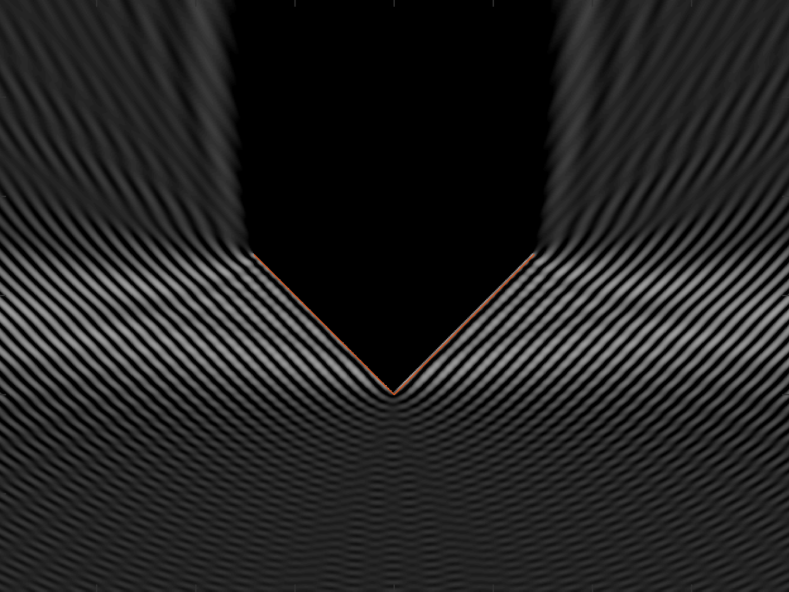}
	\end{subfigure} \hspace{0.1cm}
	\begin{subfigure}[b]{0.45\linewidth}
		\centering
		\includegraphics[width=0.997\linewidth]{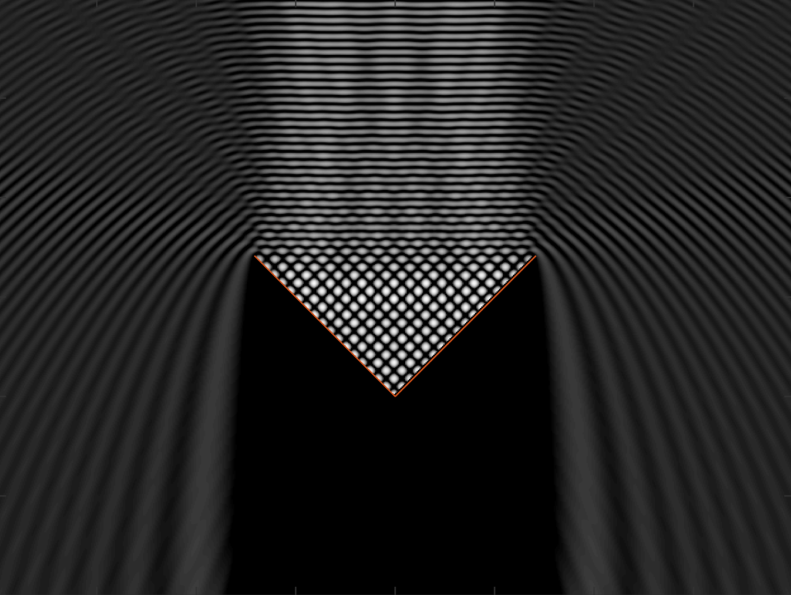}
	\end{subfigure}
	\caption{Scattering patterns for a plane wave with vertical incidence (left: bottom to top, right: top to bottom) for a V-shaped sound-hard (Neumann boundary conditions) screen with $\theta = \frac{\pi}{2}$ and $k\abs{\Gamma}= 50\pi$. On the left, notice that the energy is deflected in the orthogonal directions. On the right, notice the resonating aspect of the solution}
	\label{angle}
\end{figure}

\begin{table}
	\begin{center}
		\begin{tabular}{m{4em} | m{4em} | m{4em} | m{4em} | m{4em}} 
			\hline 
			& \multicolumn{2}{c}{With prec.} & \multicolumn{2}{c}{Without prec.}\\\hline
			$k\abs{\Gamma}$ & $n_{it}$ & $t(s)$ & $n_{it}$ & $t(s)$\\
			\hline\hline 
			50$\pi$ & 9 & $<0.1$ & 97 & $0.25$\\\hline
			200$\pi$ & 10 & 0.3 & 157 & 3.1\\\hline
			400$\pi$& 11 & 3.1 & 190 &  41\\ \hline
			800$\pi$ & 14 & 8 & 231 & 138 \\ \hline
			1600$\pi$ & 18 & 48 & - & $>15$min\\ \hline
		\end{tabular}
	\end{center}
	\caption{Computing time and number of GMRES iterations for the numerical resolution of the Helmholtz weighted single-layer integral equation on the V-shaped arc, with $\theta = \frac{\pi}{2}$ respectively with the square root preconditioner and without preconditioner.}
	\label{tableNitAngleHelmholtzDirichlet}
\end{table}

\subsection{Influence of the number of Padé approximants}
\label{secPade}

The method is not very sensitive to the number of Padé approximantes, i.e. the parameter $N_p$ in \autoref{DefPrecPade}. We show this in \autoref{sensibilitePade} in the case of the Dirichlet problem for the spiral-shaped arc with $k\abs{\Gamma} = 200\pi$ and $u_D = e^{ikx}$. The parameter $\varepsilon$ and the angle of the branch rotation $\theta$ remain fixed (see section 5). 

\begin{figure}[H]
	\centering
	\begin{subfigure}[t]{0.45\linewidth}
		\centering
		\includegraphics[height=4.3cm]{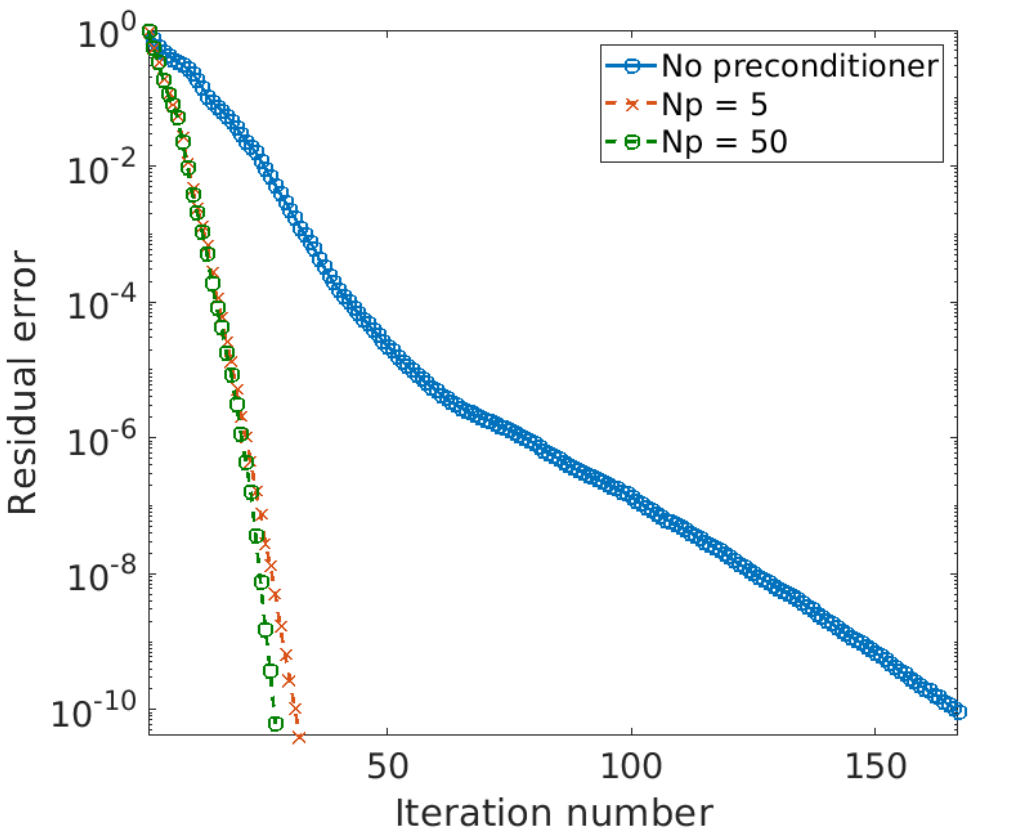}
	\end{subfigure} \hspace{0.1cm}
	\begin{subfigure}[t]{0.45\linewidth}
		\centering
		\includegraphics[height=3.9cm]{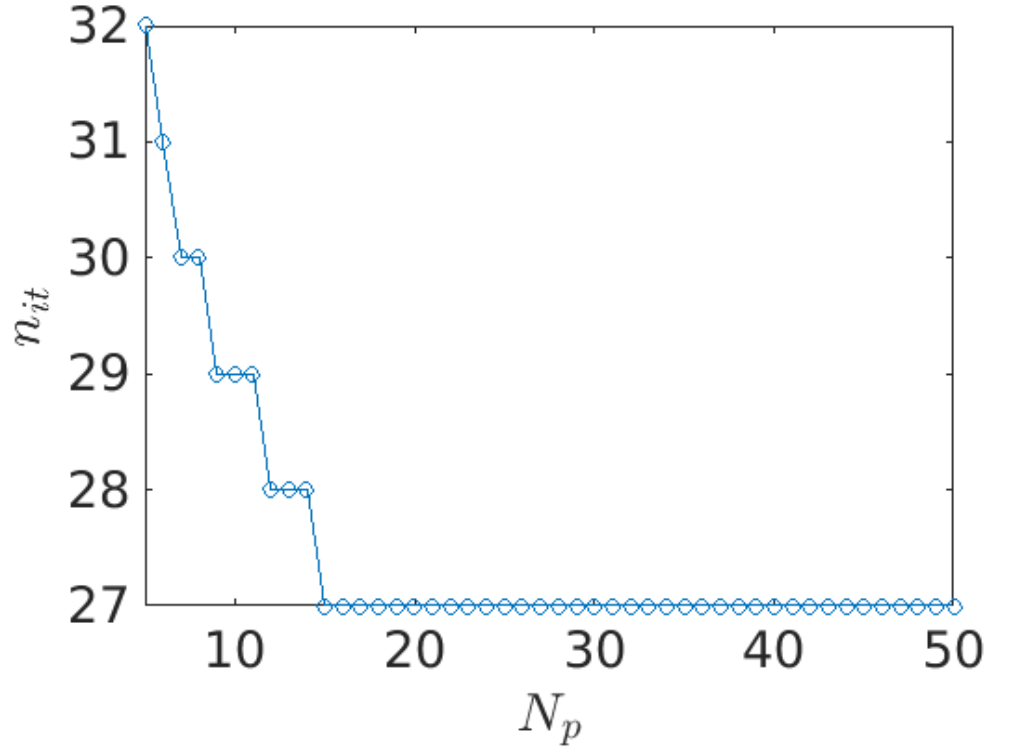}
	\end{subfigure}
	\caption{Influence of the number of Padé approximants $N_p$ on the number $n_{it}$ of GMRES iterations for the resolution of the Helmholtz weighted single-layer integral equation on the spiral-shaped screen for $k\abs{\Gamma}=200\pi$. The left figure compares the number of iterations when $N_p = 5$ and $N_p = 50$ (dashed line, respectively green circles and red crosses), to the case where no preconditioner is used (solid line, blue circles). On the right panel, we plot the number of iterations as a function of $N_p$ for this problem. One can see that the number of iterations stagnates once $N_p > 15$. }
	\label{sensibilitePade}
\end{figure}

\subsection{Importance of the correction}

It is crucial to include the correct dependence in $k$ in the preconditioners. We report here some numerical results in several situations where this dependence is not respected. 

\paragraph{Laplace preconditioning} First, we precondition the Helmholtz weighted single-layer integral equation on the flat segment with the operator
\[P'_0 = \sqrt{-(\omega \partial_x)^2 + I_d}\]
instead of 
\[P_k = \sqrt{-(\omega \partial_x)^2 -k^2 \omega^2}\,.\]
An identity operator is added under the square root to $P'_0$ to make it invertible. It is easy to check that $2P'_0$ is spectrally equivalent to the inverse of $S_{0,\omega}$ with 
\[\norm{2P'_0S_{0,\omega}} \leq  \sqrt{2} \leq 1.5 \,, \quad \norm{(2P'_0S_{0,\omega})^{-1}} \leq \frac{1}{\ln 2} \leq 1.5\,.\]
Since the Laplace preconditioner is also a compact perturbation of the inverse of $S_{k,\omega}$, the theory predicts (see e.g. \cite{hiptmair2006operator}) that the number of iterations remains bounded when the frequency is fixed and the mesh is refined. This result is confirmed numerically in \autoref{constantItKfixed}. We see indeed in practice that no matter how the mesh is refined, the number of iterations remains constant. However, we see in the previous example that this number of iterations grows with $k$. Including the dependence in $k$ in the preconditioner reduces a lot this behavior, as illustrated in \autoref{figureQuiTue}.

\begin{figure}[H]
	\centering
	\begin{subfigure}[t]{0.45\linewidth}
		\centering
		\includegraphics[height=3.3cm]{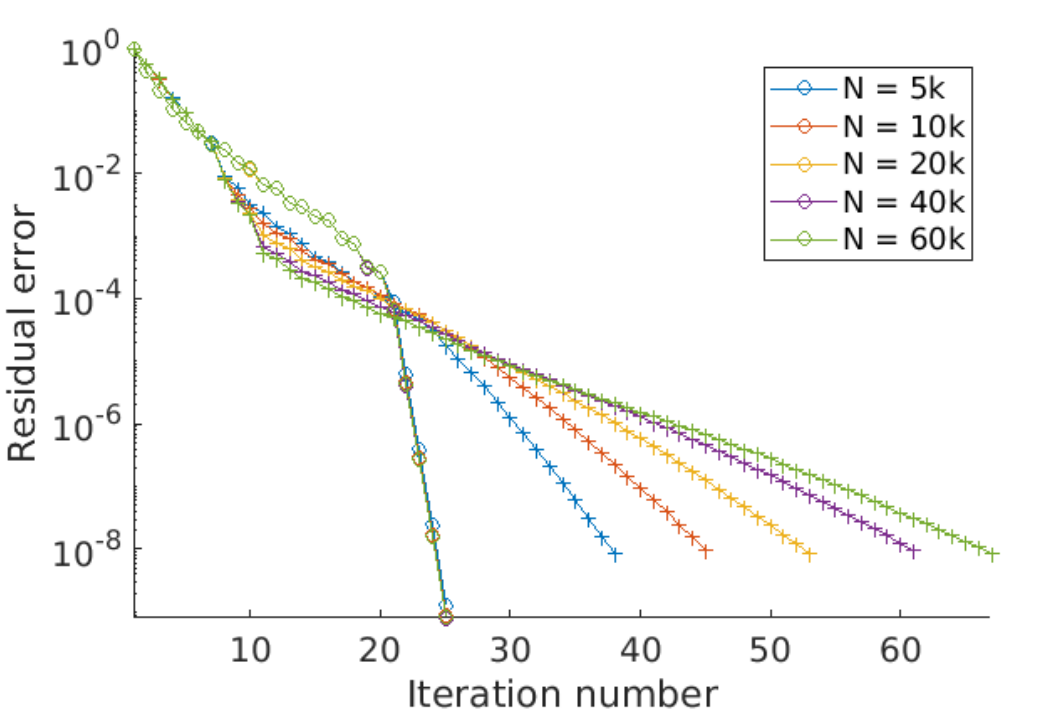}
	\end{subfigure}
	\begin{subfigure}[t]{0.45\linewidth}
		\centering
		\includegraphics[height=3.2cm]{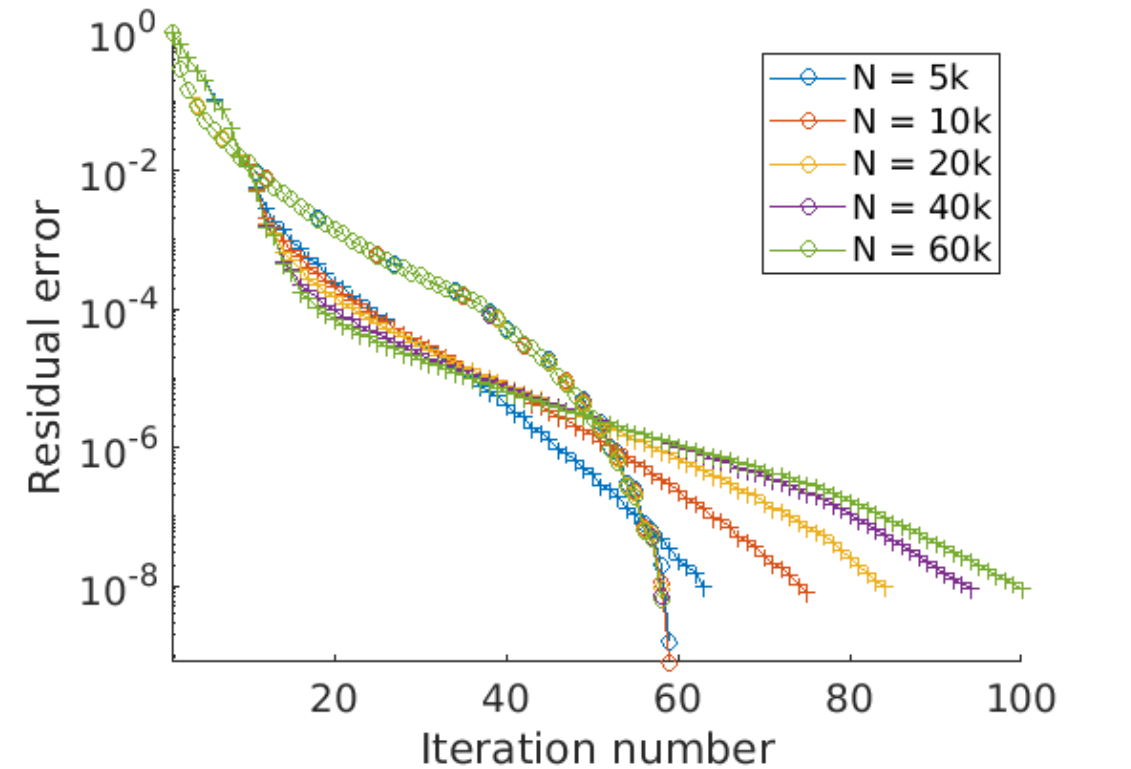}
	\end{subfigure}
	\caption{History of the GMRES relative residual for the resolution of the Helmholtz weighted single-layer integral equation on the flat segment for $k = 15$ (left) and $k = 50$ (right). The mesh is progressibely refined, the level of refinement begin indicated by the color of the curves. We compare the situation where the linear system is preconditioned by $P'_0$ (circles) as opposed to the case where no preconditioner is used (crosses). Notice that the curves with circles are almost superimposed. We thus verify in practice that the number of iterations for the preconditioned system is independent of the discretization (though not of $k$)}
	\label{constantItKfixed}
\end{figure}
\vspace{-1.3cm}
\begin{figure}[H]
	\centering
	\includegraphics[width= 0.5\textwidth]{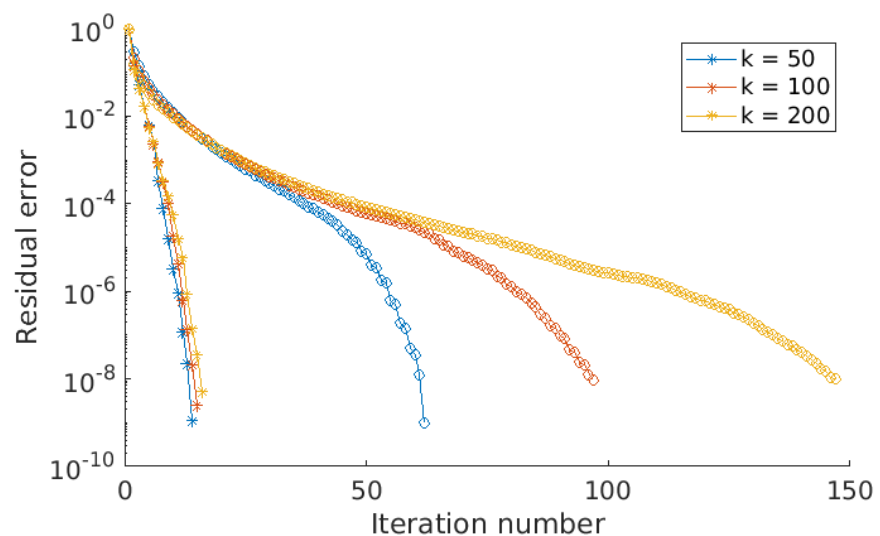}
	\caption{History of the GMRES relative residual for solving the Helmholtz weighted single-layer integral equation preconditioned either with $P'_0$ (circles) or $P_k$ (stars). Different colors represent different wave numbers. In each case, we keep the proportionality $N \approx 5k\abs{\Gamma}$. We notice that $P_k$ leads to similar numbers of iterations for the three wavenumbers. Such robustness is not observed for $P'_0$. }
	\label{figureQuiTue}
\end{figure}

\paragraph{No singularity correction} Second, we test the preconditioner without singularity correction, which is the method obtained when we take $\omega \equiv 1$. That is, we solve the non-weighted integral equation 
\begin{equation}
\label{nonWeigthed}
S_k \lambda = u_D
\end{equation}
on the flat segment with a standard $\mathbb{P}^1$ Galerkin method, and build a preconditioner based on the operator 
\begin{equation}
\label{defPprimeK}
P'_k = \sqrt{-\partial_x^2 - k^2 I_d}\,.
\end{equation}
This is the direct application of the method of Antoine and Darbas \cite{antoine2007generalized} to the context of an open curve. As stated at the beginning of section 4, if a uniform mesh is used, the Galerkin approximation converges at the rate $O(\sqrt{h})$ only. One remedy is to use a mesh graded towards the edges. A graded mesh of parameter $\beta$ is a mesh such that near the edge, the width of the $i$-th interval is approximately $(ih)^\beta$. The parameter $\beta = 2$ corresponds to the mesh defined in our Galerkin method defined, and $\beta = 5$ is the one that theoretically leads to the same rate of convergence as in our method \cite{postell1990h}. In \autoref{tableGraded}, we report the number of iterations in the GMRES method for the resolution of eq.~\eqref{nonWeigthed} preconditioned by $P'_k$ for $k = 10\pi$ on graded meshes for different parameters $\beta$. We compare the results with our preconditioned weighted Galerkin method. In each case, we report the $H^{-\frac{1}{2}}$ error. One can see that mesh-refinement allows to decrease the error at the price of losing the performance of the preconditioner. This justifies the need for the method introduced in this paper. 

\begin{table}[H]
	\begin{center}
		\begin{tabular}{m{10em}|m{3em}|m{12em}}
			\hline
			Numer. method & $n_{it}$ & relative $H^{-1/2}$ error \\
			\hline\hline
			Unif. mesh ($\beta = 1$) & 10 & 0.088 \\\hline
			Graded $\beta = 2$ & 12 &0.020 \\\hline
			Graded $\beta = 3$ & 13 &0.0066 \\\hline
			Graded $\beta = 4$ & 17 &0.0036 \\\hline
			Graded $\beta = 5$ & 21 &0.0030\\\hline
			Weighted Galerk. & 7 &2.2e-5\\\hline
		\end{tabular}
	\end{center}
	\caption{Number of GMRES iterations and $H^{-1/2}$ error for the resolution of the singe-
		layer integral equation $S_k \lambda = u_D$ on the flat segment, with $k = 10\pi$, $N = 80$. In the first five lines, the equation is solved using a standard Galerkin method on a graded mesh with
		parameter $\beta$ ranging from 1 (uniform mesh) to 5. In all those 5 cases, we precondition the linear system using a discrete version of $P'_k$ eq.~\eqref{defPprimeK}. The last line refers to the weighted Galerkin method described in this work, with the new square root preconditioner. Even for this small problem, the Galerkin matrices associated to graded meshes
		become very close to singular, and the preconditioners are no longer accurately evaluated using the Padé approximation as in section 5. The square root is instead computed directly using an eigenvalue decomposition of the matrices. The $H^{-1/2}$ error is obtained by computing a reference solution on a mesh which is refined until the error estimates stabilize.}		
	\label{tableGraded}
\end{table}

\subsection{Comparison with the generalized Calder\'{o}n preconditioners}

We finally adapt to our context the idea of Bruno and Lintner \cite{bruno2012second}, namely to use $S_{k,\omega}$ and $N_{k,\omega}$ as mutual preconditioners. Notice that the way we discretize the problem is different from \cite{bruno2012second} where a spectral method is used. In our setting, using the notation of section 5, we define the preconditioners
\[P_1 = [I_d]_\omega^{-1} [N_{k,\omega}]_\omega [I_d]_\frac{1}{\omega}^{-1}\,,\quad P_2 = [I_d]^{-1}_\frac{1}{\omega} [S_{k,\omega}]_\frac{1}{\omega}[I_d]_\omega^{-1}\]
respectively for the weighted single-layer and weighted hypersingular integral equations. We report the number of iterations and computing times respectively for the Dirichlet and Neumann problems on the flat segment respectively in \autoref{TableBrunoVsSqrtDir} and \autoref{TableBrunoVsSqrtNeu}. The performance is compared to that of our new preconditioners. The rhs are respectively $u_D = u_{inc}$ and $u_N = \frac{\partial u_{inc}}{\partial n}$ where $u_{inc}$ is a plan wave of angle of incidence $\frac{\pi}{4}$. Our results confirm the efficiency of the Generalized Calder\'{o}n preconditioners, which iteration counts remain very stable with respect to $k$. Despite a slightly larger increase of the iterations for our method, the resolution resolution remains faster for the tests presented here, particularly for the Dirichlet problem. This is due to the fact that our preconditioners are evaluated faster. 

\begin{table}[H]
	\begin{center}
		\begin{tabular}{m{4em} | m{4em} | m{4em} | m{4em} | m{4em}} 
			\hline
			\multicolumn{1}{c|}{ }&
			\multicolumn{2}{c|}{Calder{\'o}n Prec.}&\multicolumn{2}{c}{Square root Prec.}\\
			\hline
			$k \abs{\Gamma}$ & $n_{it}$& t(s) & $n_{it}$ & t(s)\\
			\hline\hline
			50$\pi$ & 15 & $<0.1$ & 8 & $< 0.1$\\
			\hline
			200$\pi$ & 15 & 0.45 & 10 &  0.35\\
			\hline
			400$\pi$ & 15 & 11 & 13 & 5\\
			\hline
			800$\pi$ & 15 & 42 & 16 & 18\\
			\hline
		\end{tabular}
	\end{center}
	\caption{Computing time and number of GMRES iterations for the Helmholtz weighted single-layer integral equation on the flat segment, respectively with the Calder\'{o}n preconditioner and with our new square-root preconditioners.}
	\label{TableBrunoVsSqrtDir}
\end{table}
\begin{table}[H]
	\begin{center}
		\begin{tabular}{m{4em} | m{4em} | m{4em} | m{4em} | m{4em}} 
			\hline
			\multicolumn{1}{c|}{ }&
			\multicolumn{2}{c|}{Calder{\'o}n Prec.}&\multicolumn{2}{c}{Square root Prec.}\\
			\hline
			$k \abs{\Gamma}$ & $n_{it}$& t(s) & $n_{it}$ & t(s)\\
			\hline\hline
			50$\pi$ & 15 & $<0.1$ & 10 & $< 0.1$\\
			\hline
			200$\pi$ & 16 & 0.3 & 13 &  0.3\\
			\hline
			400$\pi$ & 17 & 18 & 15 & 7\\
			\hline
			800$\pi$ & 17 & 68 & 18 & 34\\
			\hline
		\end{tabular}
	\end{center}
	\caption{Computing time and number of GMRES iterations for the Helmholtz weighted hypersingular integral equation on the flat segment, respectively with the Calder\'{o}n preconditioner and with our new square-root preconditioners.}
	\label{TableBrunoVsSqrtNeu}
\end{table}

\section{Conclusion} 
We have presented a new approach for the preconditioning of integral equations coming
from the discretization of wave scattering problems in 2D by open arcs. The methodology
is very effective and proven to be optimal for Laplace problems on straight segments. It generalizes the formulas mainly proposed in \cite{antoine2007generalized} for regular domains, by the simple addition of a suitable weight.
We deeply believe that the methodology opens new perspectives for such problems.
First, it is possible to generalize the approach in 3D for the diffraction by a disk (see \cite[Chap. 4]{these}). Second, the strategy that we used here seems very likely to be extended to
the half line and hopefully to 2D sectors, giving, on the one hand a new pseudo-differential
analysis more suitable than classical ones (see e.g. \cite{melrose,schulze1,schulze2}) for handling Helmholtz-like
problems on singular domains, and, on the other hand, a completely new preconditioning
technique adapted to the treatment of BEM operators on domains with corners or wedges
in 3D. Eventually, the weighted square root operators that appear in the present context might well be generalized to give suitable approximation of the exterior Dirichlet
to Neumann map for the Helmholtz equation. Having efficient approximations of this map is of particular importance in many contexts, such as e.g. domain decomposition methods.

\end{document}